\theoremstyle{plain}
\newtheorem{Theorem}{Theorem}[section]
\theoremstyle{definition}
\newtheorem{Definition}{Definition}[Theorem]
\theoremstyle{remark}
\newtheorem{Remark}[Theorem]{Remark}
\theoremstyle{remark}
\theoremstyle{plain}
\newtheorem{Lemma}[Theorem]{Lemma}
\theoremstyle{plain}
\theoremstyle{plain}
\theoremstyle{remark}
\theoremstyle{remark}
\theoremstyle{remark}
\newtheorem{Hypothesis}[Theorem]{Hypothesis}
\theoremstyle{remark}
\newcommand\RR{\mathbb{R}}
\newcommand\NN{\mathbb{N}}
\DeclareMathAlphabet{\pazocal}{OMS}{zplm}{m}{n}
\newcommand{\OO}{\mathcal{O}}
\newcommand{\FT}{\mathscr{F}_t}
\newcommand{\HU}{H^{-1}}
\begin{document}

\title{Optimal control of the FitzHugh-Nagumo stochastic model with nonlinear diffusion}
\author{Francesco Cordoni$^{a}$ \and Luca Di Persio$^{a}$}
\date{}
\maketitle

\renewcommand{\thefootnote}{\fnsymbol{footnote}}
\footnotetext{{\scriptsize $^{a}$ Department of Computer Science, University of Verona, Strada le Grazie, 15, Verona, 37134, Italy}}
\footnotetext{{\scriptsize E-mail addresses: francescogiuseppe.cordoni@univr.it
(Francesco Cordoni), luca.dipersio@univr.it (Luca Di Persio)}}

\begin{abstract}
We consider the existence and first order conditions of optimality for a stochastic optimal control problem inspired
by the celebrated FitzHugh-Nagumo model, with nonlinear diffusion term, perturbed by a linear multiplicative Brownian-type noise. 
The main novelty of the present paper relies on the application of the {\it rescaling method} which allows us
to reduce the original problem to a random optimal one.
\end{abstract}

{\bf Keywords:} FitzHugh-Nagumo model, stochastic process, optimal control, $m$-accretive operator, Cauchy problem\\
{\bf AMS Subject Classification:} 47H06, 60H15, 91G80, 93E20

\maketitle

\section{Introduction}\label{SEC:Intro}

Consider the following problem

{\footnotesize
\begin{equation} \label{EQN:RDFHN}
\begin{cases}
\partial_t v(t,\xi) = \Delta \gamma(v(t,\xi)) - I^{ion} (v(t,\xi)) - f(\xi) v(t,\xi) + F(t,\xi) +  v(t,\xi) \partial_t W(t) \;, \xi \in \mathcal{O}\\
v(0,\xi) =v_0(\xi) \;, \\
\gamma(v(t,\xi)) = 0 \;, \text{on} \; (0,T) \times \partial \mathcal{O} \;.
\end{cases}
\end{equation}
}

$\gamma: \mathbb{R} \rightarrow \mathbb{R}$ being a monotone, increasing continuous function,  $v=v(t, \xi)$  represents the {\it transmembrane electrical potential}, $\OO \subset \RR^d$, $d=2,3,$ is a bounded and open set with smooth boundary $\partial \OO$. We indicate with $\Delta_\xi$  the Laplacian operator with respect to the spatial variable $\xi$, while $\varepsilon$ and $\delta$ are positive constants representing phenomenological coefficients, $f(\xi)$ is a given external forcing term, while  $I^{ion}$ is the {\it Ionic current} and, according with the FitzHugh-Nagumo model, it equals $I^{ion}(v)= v(v-a)(v-1)$, $v_0$, $w_0 \, \in L^2(\OO)$, namely it represents a cubic non-linearity. Also $F$ is a bounded term needed to treat the general controlled equation in next section.

Equation \eqref{EQN:RDFHN} with linear diffusion, i.e. $\gamma(x)=x$, is the well-known FitzHugh--Nagumo (FHN) equation. FHN equation is a reaction--diffusion equation, first introduced by Hodgkin and Huxley in \cite{HH} and then simplified in \cite{FH,Nag}. The model has been proposed to provide a rigorous, yet simplified, analysis of electrical impulses dyanmics along a nerve axon, see, e.g., \cite{Rin}, where the propagation of the transmembrane potential on the nerve axon is represented by a cubic nonlinear reaction term, possibily perturbed by a noisy one, see, e.g., \cite{BCDP,CDPFHN,Rin,Tuc}. 

The random perturbation represents the effect of noisy input currents within neurons, their source being the random opening/closing actions of  ion channels, see, e.g., \cite{Tuc}. Moreover, in two-dimensional and three-dimensional settings,  equation \eqref{EQN:RDFHN} plays also a relevant role in statistical mechanics,  under the name of \textit{Ginzburg-Landau} equation, as well as concerning phase transition models of \textit{Ginzburg-Landau} type, see, e.g., \cite{Die}. 

The general case where $\gamma$ is a monotone function corresponds to an anomalous--diffusive FitzHugh--Nagumo (FHN) equation, see \cite{Iqb}, also describing phase transitions in porous media, see, e.g., \cite{Pan,Sid}.

\begin{Remark}
In what follows we shall focus on the mathematical setting behind the Stochastic FitzHugh-Nagumo (FHN) model, without
entering into details about the neuro-biological justification of parameters characterizing it. Appropriate details, as well as
in depth analysis of the existing literature on the subject, will be provided later.
\end{Remark}

We assume that $W$ is a $H^{-1}:=H^{-1}(\OO )-$cylindrical Wiener processes, such that
\[
\begin{split}
W(t,\xi) &= \sum_{n= 1}^\infty \mu_n e_n \beta_n(t) \,,\quad t \geq 0\,,\xi \in \OO\,,\\
\end{split}
\]
where $\{\beta_n\}_{n \geq 1}$  is a sequence of mutually independent standard Brownian motion defined on a filtered probability space $\left (\Omega,\mathcal{F},\left \{\mathcal{F}_t\right \}_{t \geq 0},\mathbb{P}\right )$, while  $\{e_n\}_{\geq 1}$ is an orthonormal basis in $\HU$ and $\mu_n \in \RR$.

Since the Laplacian operator $\Delta_\xi$ is a linear operator in $L^2\left (\OO\right )$, and $-\Delta_\xi$ is self--adjoint, then  there exists a complete orthonormal system $\{\bar{e}_k\}_{k\geq 1}$ in $L^2\left (\OO\right )$ of eigenfunctions of $-\Delta_\xi$, and we shall indicate the corresponding sequence of eigenvalues denoted by $\{\bar{\lambda}_k\}_{k\geq 1}$. Therefore, we have
\[
\Delta_\xi \bar{e}_k = -\bar{\lambda}_k \bar{e}_k\,,\quad k \in \NN\;.
\]

Also, we set
\begin{equation}\label{EQN:OpG}
Gv = I^{ion}(v) = v(v-a)(v-1) \;,
\end{equation}
and note that $G$ is monotonically nondecreasing.

The present paper addresses the problem of existence and uniqueness of a strong solution, in a sense to be better specified in a while, to equation \eqref{EQN:RDFHN}. We stress that this is not a trivial problem as the nonlinear operator $\Delta \gamma$ is naturally defined on the space $\HU$ whereas the nonlinear polynomial perturbation $I^{ion}$ is not $m-$accreative on the same space. In order to solve above problem we will transform the original equation, via a \textit{rescaling transformation}, to a random PDE. It turns out that the existence and uniqueness of transformed random PDE can be treated by the theory of nonlinear semigroup in $L^1$.

We will further consider the problem of existence of an optimal control for the nonlinear FHN equation. Again, in order to solve the problem we will apply a \textit{rescaling transformation} to obtain a corresponding random PDE. As already emerged in \cite{BCDP,CDPFHN}, the nonlinear polynomial term implies that standard minimization argument does not apply. Therefore, existence of an optimal control is achieved using \textit{Ekeland’s variational principle}. First order conditions of optimality are given in terms of dual stochastic backward equation, see, e.g, \cite{BCDP,Bre}, whereas, due to the applied \textit{rescaling transformation} are expressed in terms of a random backward dual equation which allows to simplify the setting also giving more insights on the derived optimal controller.

The present paper is structured as follows: Section \ref{SEC:MS} introduces main notation used thorough the paper. Section \ref{SEC:E!} addresses the problem of proving existence and uniqueness for the state equation whereas in Section \ref{SEC:OC} the problem of the existence for an optimal control is considered.

\subsection{Main notations}\label{SEC:MS}

In what follows we will denote by $| \cdot|$, resp. $\langle \cdot,\cdot\rangle$, the norm, resp. scalar product, $\RR^d$. Also, $L^p\left (\OO\right ) =: L^p$, for $1 \leq p \leq \infty$, is the standard space of $p-$Lebesgue measurable function over the domain $\OO \subset \RR^d$, with corresponding norm defined as $|\cdot|_p$. For the case $p=2$, we will further denote by $\langle \cdot,\cdot\rangle_2$ the scalar product in $L^2$. The space $H^1(\OO)=: H^1$ is the Sobolev space $\left \{u \in L^2\,:\,\partial_\xi u(\xi) \in L^2\right \}$, endowed with standard norm $\|u\|_{H^1}^2:= \int_{\OO} \left (|u|^2 + |\nabla u|^2 \right )d\xi$. The dual of the space $H^1$ will be denoted as $\HU$ equipped with corresponding norm $|\cdot|_{-1}$.

Similarly, we will denote by $W^{n,p}(\OO)=:W^{n,p}$, $n \in \NN$, $1\leq p \leq \infty$, the standard Sobolev space of $p-$integrable functions with $p-$integrable $n-$order derivatives. Coherently, $W^{1,p}([0,T];\HU)$ will be the space of absolutely continuous function $u:[0,T]\to \HU$ such that both $u$ and $\frac{d}{dt}u \, \in L^p([0,T];\HU)$. Further, given a Banach space $X$, $L^p([0,T];X)$ is the space of $X-$valued Bochner $p-$integrable functions on the interval $[0,T]$. Also, $C([0,T];X)$, resp. $C^1([0,T];X)$, denotes the space of continuous, resp. continuously differentiable, functions $u:[0,T]\to X$.  

We shall also introduce $C_W([0,T];H)$ the space of all $\HU$--valued $\left (\mathcal{F}_t\right )$--adapted processes such that $X \in C\left ([0,T];L^2\left (\Omega;\HU\right )\right )$, that is $X$ satisfies
\[
\sup_{t \in [0,T]} \mathbb{E}|X(t)|^2_{-1} <\infty\,.
\]

In an analogous manner $L^2_W([0,T];\HU)$ is the space of all $\HU$--valued $\left (\mathcal{F}_t\right )$--adapted processes such that $X \in L^2\left ([0,T];L^2\left (\Omega;\HU\right )\right )$, that is $X$ satisfies
\[
\int_{0}^T \left ( \mathbb{E}|X(t)|^2_{-1}\right )^2 dt<\infty\,.
\]

At last $L^2_W(\Omega;C\left ([0,T];\HU\right ))$ denotes the space of all $\HU$--valued $\left (\mathcal{F}_t\right )$--adapted  and continuous processes such that 
\[
\mathbb{E} \sup_{t \in [0,T]} \left |X(t)\right |^2_{-1}<\infty\,.
\]

Above definition are still in place if instead of $\HU$ we consider a general Hilbert space $H$. It is also known that there is a natural embedding of $L^2_W(\Omega;C\left ([0,T];\HU\right ))$ into the space $C_W([0,T];\HU)$, see, e.g. \cite[Chapter 1]{BDPR}.

We therefore can rewrite equation \eqref{EQN:RDFHN} as
\begin{equation}\label{EQN:MainFHN}
\begin{cases}
dX(t)-[\Delta(\gamma( X(t)))-G(X(t)) + fX(t) + F]dt =XdW(t),\\
X(0) = x_0  \in \HU\, ,\quad  t \in [0,T]\, ,
\end{cases}   
\;.
\end{equation}

We will assume the following to hold.

\begin{Hypothesis}\label{HYP:1}
\begin{description}
\item[(i)] $\gamma:\RR \to \RR$ with $\gamma(0)=0$ is continuous, monotonically non--decreasing and there exists $C>0$ such that
\[
(\gamma(x_1) - \gamma(x_2))(x_1-x_2) \geq C(x_1 - x_2)^2\,,\quad \forall \, x_1,\,x_2 \in \RR\,,
\] 
\item[(ii)] $F \in L^\infty((0,T)\times \OO)$, $\mathbb{P}-$a.s. and it is progressively measurable w.r.t. $(0,T) \times \Omega \times \mathcal{B}(\OO)$; $f \in L^\infty(\OO)$, and $f \geq 0$ $a.e.$ in $\OO$;
\item[(iii)] $W$ is a $\HU:=H^{-1}(\OO)-$cylindrical Wiener processes, that is,
\[
W = \sum_{j=1}^\infty \mu_j e_j \beta_j\,,
\]
with 
\[
\sum_{j=1}^\infty \mu_j^2 |e_j|^2_{L^\infty(\OO)} < \infty\,,
\]
see, \cite[pag. 22]{BDPR}.
\end{description}
\end{Hypothesis}


Then, we can state the notion of solution to equation \eqref{EQN:MainFHN} that we will consider in subsequent analysis.

\begin{Definition}\label{DEF:SS}
Let $x \in \HU$, we say that the process 
\[
X \in L^2_W \left (\Omega;C\left ([0,T];\HU\right )\right ) \cap L^2_W([0,T];L^2)\,,
\]
is a \textit{strong solution} to \eqref{EQN:MainFHN} if $X(t): [0,T] \to \HU$ is $\mathbb{P}-$a.s. continuous and $\forall$ $t \in [0,T]$
\[
X(t) = x + \int_0^t \left (\Delta(\gamma(X(s))) - G(X(s)) + fX(s) + F(s)\right )ds + \int_0^t X(s) dW(s), .
\]
\end{Definition}

\section{Existence for the state equation}\label{SEC:E!}

The main problem in proving existence and uniqueness for a solution to equation \eqref{EQN:MainFHN} is that the operator $G$ in not m-accretive on the space $\HU$ and so basic existence results in \cite{BDPR,BDPR3} are not applicable in the present case. It turns out that the proper space one has to consider to successfully treat equation \eqref{EQN:MainFHN} is the space $L^1$, which, in turn, is not the proper one if one has to deal with SPDEs such as \eqref{EQN:MainFHN}.

To overcome such a stalemate,  we follow \cite{BR,BR2}. In particular, we apply the transformation $X=e^Wy$, which allows to reduce the stochastic equation \eqref{EQN:MainFHN} to a random PDE that can be treated with analytical techniques. In fact, the random equation can be successfully solved  by exploiting the theory of nonlinear semigroup in $L^1$. As noted in \cite{BR}, we have still to face the problem that, because of the non regularity of the term $W$, the general theory cannot be applied straightforward to the resulting random PDE. Therefore, for $\epsilon>0$, we shall consider a suitable sequence of regular approximations $W_\epsilon$ of $W$, to first establish a priori estimates for solutions $y_\epsilon$ of the associated $W_\epsilon-$approximating problem, and then to show that, in  the limit $\epsilon \to 0$, we obtain both existence and uniqueness of the solution for the original equation.

The following theorem constitutes the main result of this section.

\begin{Theorem}\label{THM:E!}
There is a unique strong solution to equation \eqref{EQN:MainFHN} $X=e^Wy$ which satisfies
\[
Xe^{-W} \in W^{1,2}([0,T];\HU)\,,\quad \mathbb{P}-a.s.
\]
\end{Theorem}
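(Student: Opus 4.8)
The plan is to implement the rescaling strategy already announced: use the change of variable $X=e^{W}y$ to eliminate the stochastic integral, reduce \eqref{EQN:MainFHN} to a pathwise random PDE for $y$, solve a time-regularized version of that PDE by nonlinear semigroup theory in $L^1$, derive a priori estimates uniform in the regularization parameter, and then pass to the limit. The target regularity is exactly $y=Xe^{-W}\in W^{1,2}([0,T];\HU)$, $\mathbb{P}$-a.s., so the whole argument is organized around producing and preserving that bound.

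First I would compute, via It\^o's formula, the equation satisfied by $y=e^{-W}X$. Writing $A(X):=\Delta(\gamma(X))-G(X)+fX+F$ and using that the linear multiplicative noise contributes the It\^o correction $-\tfrac{1}{2}X\,d[W]$, the stochastic integrals cancel and one is left with the random evolution equation
\[
\frac{dy}{dt}=e^{-W}\Delta\bigl(\gamma(e^{W}y)\bigr)-e^{-W}G(e^{W}y)+fy+e^{-W}F-\tfrac{1}{2}\mu\,y,\qquad y(0)=x,
\]
where $\mu(\xi)=\sum_{j}\mu_j^2 e_j(\xi)^2$ is bounded by Hypothesis \ref{HYP:1}(iii). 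The structural point is that the realization of $-\Delta\gamma$ carrying the boundary condition $\gamma(\cdot)=0$ on $\partial\OO$ is $m$-accretive in $L^1$, and conjugation by the positive bounded multipliers $e^{\pm W}$ together with the bounded monotone perturbation $G$ (see \eqref{EQN:OpG}) and the bounded lower-order terms $fy$, $\tfrac12\mu y$, $e^{-W}F$ keeps the right-hand side quasi-$m$-accretive on $L^1$ at each time.

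Second, since $W$ is not differentiable in $t$, I would regularize as in \cite{BR,BR2}: replace $W$ by smooth-in-time approximations $W_\epsilon$ and consider the approximating problems. For each fixed $\omega$ and $\epsilon$ the coefficients $e^{\pm W_\epsilon}$ are smooth in time and bounded, so the resulting Cauchy problem admits a unique solution $y_\epsilon\in W^{1,2}([0,T];\HU)$ by nonlinear semigroup theory in $L^1$, using the coercivity of $\gamma$ from Hypothesis \ref{HYP:1}(i), the monotonicity of $G$, and the boundedness of $f,F$ from Hypothesis \ref{HYP:1}(ii). The crux is then to obtain $\epsilon$-uniform a priori estimates: pairing the approximating equation in the $\HU$–$H^1$ duality and exploiting (a) the lower bound $(\gamma(x_1)-\gamma(x_2))(x_1-x_2)\ge C(x_1-x_2)^2$, which yields the coercivity controlling $\|y_\epsilon\|_{L^2([0,T];L^2)}$, (b) the dissipativity coming from the monotonicity of $G$, and (c) Gronwall's inequality to absorb the contributions of $f$, $\mu$ and $F$. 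This should give pathwise bounds for $y_\epsilon$ in $C([0,T];\HU)\cap L^2([0,T];L^2)$ and, reading them off the equation, in $W^{1,2}([0,T];\HU)$, all independent of $\epsilon$.

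Finally I would pass to the limit $\epsilon\to0$: the uniform bounds furnish weak/weak-$*$ limits in the reflexive spaces, and an Aubin--Lions-type strong compactness ingredient is needed to identify the nonlinear limits $\gamma(e^{W_\epsilon}y_\epsilon)\to\gamma(e^{W}y)$ and $G(e^{W_\epsilon}y_\epsilon)\to G(e^{W}y)$; the limit $y$ then solves the random PDE, and $X=e^{W}y$ is a strong solution of \eqref{EQN:MainFHN} in the sense of Definition \ref{DEF:SS} with $Xe^{-W}=y\in W^{1,2}([0,T];\HU)$. Uniqueness follows from the accretive structure by subtracting two solutions and applying (a)--(b) in a Gronwall estimate. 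I expect the main obstacle to be precisely this uniform-estimate-plus-limit-identification step: because $G$ is only monotone (not $m$-accretive on $\HU$) and $\gamma$ is merely continuous and monotone, one must lean on the $L^1$/coercivity structure to secure enough compactness, and must also justify rigorously the formal It\^o transformation given that $W$ is only $\HU$-cylindrical.
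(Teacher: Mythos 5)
Your high-level strategy coincides with the paper's (rescaling $X=e^{W}y$, time-mollification $W_\epsilon$, uniform estimates, Aubin-type compactness, identification of nonlinear limits by maximal monotonicity, and an appeal to the equivalence between the SPDE and the random PDE, which the paper takes from \cite[Lemma 8.1]{BR2}), but your middle step contains a genuine gap in the treatment of the cubic term. You describe $G$ as a ``bounded monotone perturbation'' and propose to extract dissipativity from its monotonicity while pairing the equation in the $\HU$--$H^1$ duality; however $G(v)=v(v-a)(v-1)$ is unbounded, and monotonicity of $G$ on $\RR$ (i.e.\ accretivity in $L^2$ or $L^1$) does \emph{not} transfer to the $\HU$ inner product --- this is precisely the obstruction the paper states at the outset of Section \ref{SEC:E!} ($I^{ion}$ is not $m$-accretive on $\HU$). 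Consequently your $L^2$-energy-plus-Gronwall scheme does not close: an $L^\infty(0,T;L^2)$ bound on $y_\epsilon$ does not even place $G(e^{W_\epsilon}y_\epsilon)$ in $L^2(0,T;\HU)$ in dimensions $d=2,3$, so neither the claimed $W^{1,2}([0,T];\HU)$ bound ``read off the equation'' nor the identification $G(e^{W_\epsilon}y_\epsilon)\to G(e^{W}y)$ follows. The paper's actual mechanism is different and essential: a uniform $L^\infty((0,T)\times\OO)$ bound on $y_\epsilon$ (Lemma \ref{LEM:2}), proved by an $L^1$ comparison argument with the sign function against barriers $\pm(M+\alpha(t))$ and requiring the stronger assumption $x\in D(A)$ (not just $x\in\HU$, as your statement of the theorem would suggest); only after this is the cubic term tamed, and the gradient estimate is then obtained by multiplying by $\gamma(y_\epsilon)$, not by $y_\epsilon$ (Lemma \ref{LEM:1}).

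A second, related gap is the solvability of the approximating problem itself. Mollifying $W$ alone does not make the frozen-time operator tractable: the paper additionally replaces $G$ by its Yosida approximation $G_\epsilon$ (which is Lipschitz) and inserts the elliptic regularization $\epsilon z$ inside the diffusion, i.e.\ $-\Delta(\gamma(z_\epsilon)+\epsilon z_\epsilon)$ in \eqref{EQN:RPDEAppZ}. Quasi-$m$-accretivity is then proved in $\HU$ (with the $\langle\cdot,\cdot\rangle_{-1}$ product), not in $L^1$ as you propose, by reducing the resolvent equation to $\epsilon z+Bz+\Gamma z=(-\Delta)^{-1}\tilde f$ in $L^2$; the $\epsilon z$ term is also what yields $z\in D(A)$, since $z\mapsto\gamma(z)+\epsilon z$ has a Lipschitz inverse, whereas $\gamma$ alone is merely continuous and monotone. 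Your assertion that conjugation by $e^{\pm W}$ keeps the full operator (with the cubic $G$ unregularized) quasi-$m$-accretive on $L^1$ at each time is exactly the nontrivial point and is left unproven; as written, the approximating step and the uniform-estimate step of your proposal would both fail without the Yosida and $\epsilon$-elliptic regularizations and the $L^\infty$ barrier argument.
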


In order to prove  Theorem \ref{THM:E!} we need some auxiliary lemmas. In particular, let us then introduce the transformation
\begin{equation}\label{EQN:Resca}
X(t)=e^{W(t)} y(t) \,,\quad t \geq 0\,,
\end{equation}
so that by an application of the It\^{o} formula we obtain the random equation
\begin{equation}\label{EQN:RPDE}
\begin{split}
&\frac{\partial}{\partial t} y + e^{-W} G(e^W y) - e^{-W} \Delta\gamma(e^W y) + fy + \mu y = e^{-W}F \,\\
&y(0,\xi) = x(\xi)\,,\quad \xi \in \OO\,,\\
&y(t) \in H^1_0(\OO)\,,\, t \in (0,T)
\end{split}
\end{equation}
with 
\[
\mu = \frac{1}{2} \sum_{n=1}^\infty \mu_n^2 e_n^2\,,
\]
see, e.g. \cite{BR,BR2,BRrs}.

Following \cite{BR}, we prove the existence of a unique strong solution to equation \eqref{EQN:RPDE} by 
first considering an approximating problem. In particular, let us denote by $\beta^\epsilon (t) := (\beta * \rho_\epsilon)(t)$, where $\rho_\epsilon(t) = \frac{1}{\epsilon}\rho\left (\frac{t}{\epsilon}\right )$ is a standard mollifier and $\rho \in C^\infty_0$, then we have that $\beta^\epsilon \in C^1([0,T];\RR)$. Setting 
\[
W_\epsilon(t,\xi) = \sum_{n= 1}^\infty \mu_n e_n \beta^\epsilon(t) \,,\quad t \geq 0\,,\xi \in \OO\,.\\
\]
we thus have that $W_\epsilon \in C^1([0,T] \times \OO)$. Moreover
\[
W_\epsilon(t,\xi) \to W(t,\xi) \,\quad \mbox{uniformly in} \quad (t,\xi) \in [0,T]\times \xi\,,
\]
as $\epsilon \to 0$.

For each $\epsilon >0$, let us thus consider the approximating equation associated to \eqref{EQN:RPDE}
\begin{equation}\label{EQN:RPDEApp}
\begin{split}
&\frac{\partial}{\partial t} y_\epsilon + e^{-W_\epsilon} G_\epsilon(e^{W_\epsilon} y_\epsilon) - e^{-W_\epsilon} \Delta\left (\gamma(e^{W_\epsilon} y_\epsilon + \epsilon e^{W_\epsilon}y_\epsilon\right ) + fy + \mu y_\epsilon =  e^{-W_\epsilon} F\,\\
&y_\epsilon(0,\xi) = x(\xi)\,,\quad \xi \in \OO\,,\\
&y_\epsilon(t) \in H^1_0(\OO)\,,\, t \in (0,T)
\end{split}
\end{equation}
where $G_\epsilon$ is the Yosida approximation of $G$, that is
\begin{equation}\label{EQN:YosApp}
G_\epsilon := \frac{1}{\epsilon} \left (I - (I + \epsilon G)^{-1}\right )\,, \quad \epsilon >0\,.
\end{equation}

Note that, $G_\epsilon$ is monotoniccaly nondecreasing,  Lipschitzian and 
\[
\lim_{\epsilon \to 0} G_\epsilon(z) = G(z)\,,\, \forall z \in \mathbb{R}
\]
uniformly on compacts.

Defining $z_\epsilon := e^{W_\epsilon}y_\epsilon$,  equation \eqref{EQN:RPDEApp} becomes
\begin{equation}\label{EQN:RPDEAppZ}
\begin{split}
&\frac{\partial}{\partial t} z_\epsilon + G_\epsilon(z_\epsilon) - \Delta\left (\gamma(z_\epsilon)+\epsilon z_\epsilon\right ) + fz + \left ( \mu - \frac{\partial}{\partial t} W_\epsilon\right ) z_\epsilon = F_\epsilon\,, \quad \mbox{in} \, (0,T) \times \OO\,,\\
&z_\epsilon(0,\xi) = x(\xi)\,,\quad \xi \in \OO\,,\\
&\gamma(z_\epsilon(t)) + \epsilon z_\epsilon(t)\in H^1_0(\OO)\,,\, t \in (0,T)
\end{split}
\end{equation}
where 
$F_\epsilon := e^{-W_\epsilon} F$.

\begin{Lemma}
Let $x \in \HU \cap L^1$ with $\gamma(x) \in H^1_0$, then for each $\epsilon >0$ equation \eqref{EQN:RPDEApp} has a unique solution such that
\[
y_\epsilon \in W^{1,\infty} \left ([0,T];\HU\right ) \cap L^\infty \left (0,T;H^1_0\right ).
\]

\end{Lemma}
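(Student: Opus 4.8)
The plan is to exploit the fact that, once $\epsilon>0$ is frozen, problem \eqref{EQN:RPDEApp} is a genuinely non-degenerate quasilinear parabolic equation whose only nonlinearities are either strictly monotone (the diffusion) or globally Lipschitz (the reaction), so that it falls within the scope of the theory of evolution equations governed by an $m$-accretive operator perturbed by a Lipschitz map. I would work on the equivalent formulation \eqref{EQN:RPDEAppZ} for $z_\epsilon = e^{W_\epsilon}y_\epsilon$: since $W_\epsilon \in C^1([0,T]\times\OO)$ is bounded together with $\partial_t W_\epsilon$, the multiplication by $e^{\pm W_\epsilon}$ is an isomorphism of $\HU$ and of $H^1_0$ preserving $W^{1,\infty}([0,T];\HU)$, so every regularity statement for $z_\epsilon$ transfers to $y_\epsilon = e^{-W_\epsilon}z_\epsilon$.

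Set $\phi(z):=\gamma(z)+\epsilon z$ and $A z:=-\Delta\phi(z)$. The central structural fact is that $A$ is $m$-accretive, the accretivity being driven by the strong monotonicity in Hypothesis \ref{HYP:1}(i): using $\langle -\Delta u, v\rangle_{-1} = \langle u,v\rangle_2$ for $u\in H^1_0$, one gets, for admissible $z_1,z_2$,
\[
\langle Az_1 - Az_2,\, z_1 - z_2\rangle_{-1} = \langle \phi(z_1)-\phi(z_2),\, z_1-z_2\rangle_2 \ \geq\ (C+\epsilon)\,|z_1-z_2|_2^2\ \geq\ 0 .
\]
The range condition $R(I+\lambda A)=\HU$ reduces to solving the elliptic problem $z-\lambda\Delta\phi(z)=w$, which is standard for the continuous, strictly increasing and coercive $\phi$ by Minty--Browder monotonicity. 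The remaining terms $z\mapsto G_\epsilon(z)+fz+(\mu-\partial_t W_\epsilon)z$ are a perturbation $B(t,\cdot)$: $G_\epsilon$ is globally Lipschitz by construction \eqref{EQN:YosApp}, while $f$, $\mu$ and $\partial_t W_\epsilon$ are bounded in $(t,\xi)$, so $B$ is controlled by the $L^2$-coercivity produced above and $A+B(t,\cdot)$ is quasi-$m$-accretive, uniformly in $t$. The forcing $F_\epsilon=e^{-W_\epsilon}F$ lies in $W^{1,\infty}([0,T];\HU)$.

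With this in hand I would write \eqref{EQN:RPDEAppZ} as $\dot z_\epsilon + Az_\epsilon + B(t,z_\epsilon)=F_\epsilon$ and invoke the existence--uniqueness theorem for such equations (constructively, via the implicit Euler scheme / Crandall--Liggett generation), obtaining a unique strong solution with $z_\epsilon(0)=x$; uniqueness also follows directly by testing the difference of two solutions and applying Gronwall. The stated regularity is then extracted from a priori estimates: testing against $\phi(z_\epsilon)$ yields a bound for $\phi(z_\epsilon)$ in $L^\infty(0,T;H^1_0)$, and since $\phi^{-1}$ is Lipschitz with constant $(C+\epsilon)^{-1}$ this gives $z_\epsilon\in L^\infty(0,T;H^1_0)$, hence $y_\epsilon\in L^\infty(0,T;H^1_0)$; differentiating $\dot z_\epsilon = F_\epsilon - Az_\epsilon - B(t,z_\epsilon)$ in time and again using accretivity converts the regularity of the data into the Lipschitz-in-time bound $z_\epsilon\in W^{1,\infty}([0,T];\HU)$. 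The hypotheses $x\in\HU\cap L^1$ with $\gamma(x)\in H^1_0$ are exactly the compatibility conditions that make $Ax$ meaningful and start these estimates.

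I expect the main obstacle to be the functional-analytic reconciliation of the several spaces at play: the diffusion $A$ is naturally defined on the negative-order space $\HU$ (and, for the reaction term, $L^1$ is the comfortable space, as in \cite{BR}), whereas the coercivity that tames all lower-order terms is measured in $L^2$, and the initial datum is only assumed in $\HU\cap L^1$ with $\gamma(x)\in H^1_0$ rather than $x\in H^1_0$. Carrying out the $m$-accretivity verification with the correct domain, absorbing the merely $L^\infty$ multiplication operators $f\cdot$ and $(\mu-\partial_t W_\epsilon)\cdot$ (which are not bounded on $\HU$ on their own) into the coercive estimate, and upgrading mere continuity in time to the sharp $W^{1,\infty}([0,T];\HU)$ bound are the delicate points. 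It is worth stressing that both regularizations are indispensable here: the $\epsilon z$ term makes $\phi$ strictly coercive and non-degenerate, and the Yosida replacement of $G$ by the Lipschitz $G_\epsilon$ is what turns the cubic reaction into a genuine Lipschitz perturbation.
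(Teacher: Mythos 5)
Your overall strategy is the paper's: pass to $z_\epsilon=e^{W_\epsilon}y_\epsilon$ and equation \eqref{EQN:RPDEAppZ}, prove quasi-$m$-accretivity in $\HU$ of the full operator $Az=-\Delta(\gamma(z)+\epsilon z)+G_\epsilon(z)+fz+\mu z$ on $D(A)=\{z\in L^2\,:\,\gamma(z)\in H^1_0\}$, exploiting exactly the identity $\langle -\Delta(\phi(z_1)-\phi(z_2)),z_1-z_2\rangle_{-1}=\langle\phi(z_1)-\phi(z_2),z_1-z_2\rangle_2\geq (C+\epsilon)|z_1-z_2|_2^2$ you write down (the paper quotes \cite[p. 68]{Bar1} for the $m$-accretivity of $z\mapsto-\Delta(\gamma(z)+\epsilon z)$), and then invoke nonlinear semigroup generation for $x\in D(A)$ (the paper uses \cite[Lemma A.1, Corollary A.2]{BR}, which also covers the time dependence coming from $\partial_t W_\epsilon$ and $F_\epsilon$ that plain Crandall--Liggett does not). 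However, one step is asserted where the paper does its actual work: the range condition. You verify $R(I+\lambda A)=\HU$ only for the pure diffusion and then declare $A+B(t,\cdot)$ quasi-$m$-accretive because $B$ is a Lipschitz perturbation; but $G_\epsilon$, $f\cdot$ and $(\mu-\partial_t W_\epsilon)\cdot$ are Lipschitz or bounded on $L^2$, \emph{not} on $\HU$ (you flag this yourself), so the standard Lipschitz-perturbation theorem in $\HU$ gives you the accretivity-modulo-shift but not the surjectivity of $(\lambda+\alpha)I+A$. This is precisely where the paper works: it recasts the resolvent equation as \eqref{EQN:Za}, i.e.\ \eqref{EQN:zOper} $\epsilon z+Bz+\Gamma z=(-\Delta)^{-1}\tilde f$ in $L^2$ with $Bz=\gamma(z)$ $m$-accretive and $\Gamma$ continuous and accretive in $L^2$, and concludes $\mathcal{R}(\epsilon I+B+\Gamma)=L^2$ by \cite[p. 104]{Bar1}. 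Alternatively, your own coercivity estimate yields $|(\lambda_0 I+A_0)^{-1}u_1-(\lambda_0 I+A_0)^{-1}u_2|_2\leq \left (\lambda_0(C+\epsilon)\right )^{-1/2}|u_1-u_2|_{-1}$ for the diffusion part $A_0$, so $z\mapsto(\lambda_0 I+A_0)^{-1}(w-Bz)$ is a contraction on $L^2$ for $\lambda_0$ large; either way, this argument must be spelled out, since it is the mathematical core of the lemma.

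A second, local error: ``testing against $\phi(z_\epsilon)$ yields a bound for $\phi(z_\epsilon)$ in $L^\infty(0,T;H^1_0)$'' is not correct as stated. That multiplier gives $\frac{d}{dt}\int_{\OO}j(z_\epsilon)\,d\xi+|\nabla\phi(z_\epsilon)|_2^2\leq\dots$ with $j'=\phi$, hence only $\phi(z_\epsilon)\in L^2(0,T;H^1_0)$ and $z_\epsilon\in L^\infty(0,T;L^2)$. The correct order is the reverse of yours, and it is the paper's: first obtain $z_\epsilon\in W^{1,\infty}([0,T];\HU)$ from the generation theorem with $x\in D(A)$, then read off from the equation that $-\Delta(\gamma(z_\epsilon)+\epsilon z_\epsilon)\in L^\infty(0,T;\HU)$ --- the paper bootstraps the lower-order terms through $\left |(-\Delta)^{-1}G_\epsilon(z)\right |_2\leq C_\epsilon|z|_{-1}$ and \cite[Corollary A.2]{BR} --- so that $\gamma(z_\epsilon)+\epsilon z_\epsilon\in L^\infty(0,T;H^1_0)$ by the elliptic isomorphism, and finally $z_\epsilon\in L^\infty(0,T;H^1_0)$ by the Lipschitz continuity of $(\gamma+\epsilon I)^{-1}$, a step you do correctly identify. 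With the range condition proved and the regularity bootstrap reordered, your argument coincides with the paper's proof.
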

\begin{proof}
Let us first prove existence and uniqueness of a solution to equation \eqref{EQN:RPDEAppZ} in the space $\HU$. For a fixed $\epsilon >0$, let us define the operator $A:D(A)\subset \HU \to \HU$ as
\begin{equation}\label{EQN:OpA}
\begin{split}
Az &= - \Delta\left (\gamma(z) + \epsilon z\right ) + fz + G_\epsilon(z) + \mu z\,,\\
D(A) &= \left \{z \in L^2 \,:\, \gamma(z) \in H^1_0 \right \}\,.,
\end{split}
\end{equation}

We equip the space $\HU$ with the scalar product
\[
\langle y,z\rangle_{-1} := {}_{H^1}\langle \left (-\Delta\right )^{-1} y,z\rangle_{\HU} \,,
\]
where $\left (-\Delta\right )^{-1} y = x$ indicates  the solution to the Dirichlet problem $-\Delta x = y$ in $\OO$, $x \in H^1_0$.

Taking into account that $G_\epsilon$ is Lipschitz continuous in $L^2$ and since
\[
z \mapsto -\Delta(\gamma(z)+\epsilon z)\,,
\]
is $m-$accreative in the space $\HU$, see, e.g., \cite[p. 68]{Bar1}, we have that, for a suitable $\alpha = \alpha_\epsilon$, it holds 
\[
\langle (A+\alpha I)z - (A+\alpha I)\bar{z},z-\bar{z}\rangle_{-1} \geq0\,
\]
which implies $(A+\alpha I)$ to be accretive in $\HU$. 

Moreover, for $\lambda>0$ sufficiently large, we also have $\mathcal{R}\left ((\lambda + \alpha)I + A\right )=\HU$, so that $A$ is quasi-m-accretive. In other words, for $f \in \HU$ the equation
\begin{equation}\label{EQN:Za}
(\lambda + \alpha)\left (- \Delta\right )^{-1}z + \gamma(z) + \epsilon z + \left (- \Delta\right )^{-1}\left (G_\epsilon(z) + fz -\mu z\right )= \left (- \Delta\right )^{-1}\tilde{f}\,,
\end{equation}
has a unique solution in $z \in L^2$. Indeed, introducing the operators 
\[
\begin{split}
B:L^2 \to L^2\,,\quad Bz := \gamma(z)\,,
\end{split}
\]
and
\[
\begin{split}
\Gamma&: L^2 \to L^2\,,\\
\Gamma z &= (\lambda + \alpha)\left (- \Delta\right )^{-1}z + \left (- \Delta\right )^{-1}\left (G_\epsilon(z) + fz -\mu z\right )\,,
\end{split}
\]
we see that equation \eqref{EQN:Za} can be rewritten as
\begin{equation}\label{EQN:zOper}
\epsilon z + Bz + \Gamma z = \left (- \Delta\right )^{-1}\tilde{f}\,.
\end{equation}
Since $B$ is m-accretive and $\Gamma$ is m-accretive and continuous in $L^2$, it follows, see, e.g., \cite[p.104]{Bar1}, that $\mathcal{R}(\epsilon I + B + \Gamma)=L^2$, so that equation \eqref{EQN:zOper} admits a unique solution $z$ in $L^2$. Moreover, since  $\gamma(z) + \epsilon z \in H^1$ and the inverse map of $z \mapsto \gamma(z) + \epsilon z$ is Lipschitz, then  $z \in D(A)$.
It follows that, applying \cite[Lemma A.1, Corollary A.2]{BR}, see also \cite[Sec. 4]{Bar1},  $z_\epsilon$ is a strong solution to equation \eqref{EQN:RPDEAppZ} in $W^{1;\infty}([0,T];\HU)$. In addition, by \cite[Corollary A.2]{BR}, we also have 
\[
\gamma(z_\epsilon) + \epsilon z_\epsilon - \left (- \Delta\right )^{-1} G_\epsilon(z) \in L^\infty(0,T;H^1_0)\,,
\]
and 
\[
\left |\left (- \Delta\right )^{-1}G_\epsilon(z)\right |_2 \leq C_\epsilon |z|_{-1}\,,
\]
so that, since $z_\epsilon \in W^{1;\infty}([0,T];\HU)$, we obtain
\[
\gamma(z_\epsilon) + \epsilon z_\epsilon \in L^\infty(0,T;L^2)\,,
\]
and consequently $z_\epsilon \in L^\infty(0,T;L^2)$. Moreover we have that
\[
\left ( - \Delta\right )^{-1}G_\epsilon(z_\epsilon) \in L^\infty(0,T;H^1_0)\,,
\]
which implies that $\gamma(z_\epsilon) + \epsilon z_\epsilon \in L^\infty(0,T;H^1_0)$ and consequently $z_\epsilon \in L^\infty(0,T;H^1_0)$.

%
%
\end{proof}

\begin{Lemma}\label{LEM:2}

Taking $x \in D(A)$, then $y_\epsilon \in L^\infty((0,T)\times \OO)$, and it holds
\begin{equation}\label{EQN:IneqY}
\begin{split}
&\sup_{\epsilon} \left \{ |y_\epsilon|_{L^\infty((0,T)\times \OO)} \right \} \leq C(1+|x|_\infty)\,.\\
\end{split}
\end{equation}
\end{Lemma}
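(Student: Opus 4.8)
The statement is an $\epsilon$-uniform $L^\infty$ a priori bound, so the natural route is an energy estimate on the rescaled equation followed by the passage $p\to\infty$ in $L^p$; equivalently one may read it as a comparison principle with a spatially constant barrier. The first, and decisive, observation is that one cannot estimate $z_\epsilon$ directly through \eqref{EQN:RPDEAppZ}, because that equation carries the zero-order coefficient $\mu-\partial_t W_\epsilon$ and $\partial_t W_\epsilon$ is not bounded uniformly in $\epsilon$. The plan is therefore to test \eqref{EQN:RPDEAppZ} against $e^{-pW_\epsilon}\abs{z_\epsilon}^{p-2}z_\epsilon$, for $p\ge 2$, and integrate over $\OO$. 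Since $e^{-pW_\epsilon}\abs{z_\epsilon}^p=\abs{y_\epsilon}^p$, the time derivative yields $\frac1p\frac{d}{dt}\abs{y_\epsilon}_p^p+\int_\OO(\partial_t W_\epsilon)\abs{y_\epsilon}^p\,d\xi$, while the zero-order term produces $\int_\OO(\mu-\partial_t W_\epsilon)\abs{y_\epsilon}^p\,d\xi$; the two contributions of $\partial_t W_\epsilon$ cancel, leaving only the harmless $\int_\OO\mu\abs{y_\epsilon}^p\,d\xi\ge0$. This cancellation is exactly the rescaling identity realised at the level of the $L^p$ estimate, and it is what renders the bound uniform in $\epsilon$, since $W_\epsilon\to W$ uniformly on $[0,T]\times\OO$ and hence $e^{\pm W_\epsilon}$ are bounded uniformly in $\epsilon$.

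With this test function the other terms are favourable or controllable. The term $\int_\OO f\abs{y_\epsilon}^p\,d\xi\ge0$ is dropped since $f\ge0$; the reaction term equals $\int_\OO G_\epsilon(z_\epsilon)\,\sgn(z_\epsilon)\,e^{-pW_\epsilon}\abs{z_\epsilon}^{p-1}\,d\xi$, which is nonnegative because $G_\epsilon$ is monotone with $G_\epsilon(0)=0$, and can either be discarded or, better, kept to exploit the super-linear growth of the cubic; the forcing is bounded by $\int_\OO e^{-2W_\epsilon}\abs{F}\abs{y_\epsilon}^{p-1}\,d\xi\le C\abs{y_\epsilon}_p^{p-1}$ uniformly in $\epsilon$ because $F\in L^\infty$. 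Integrating the diffusion term by parts (the boundary term vanishing as $\gamma(z_\epsilon)+\epsilon z_\epsilon\in H^1_0$) produces the nonnegative coercive part $(p-1)\int_\OO(\gamma'(z_\epsilon)+\epsilon)e^{-pW_\epsilon}\abs{z_\epsilon}^{p-2}\abs{\nabla z_\epsilon}^2\,d\xi$, which uses $\gamma'\ge C>0$ from Hypothesis \ref{HYP:1}(i) and the ellipticity added by $\epsilon z_\epsilon$. Assembling these estimates — together with the control of the noise–diffusion cross term discussed below — into a differential inequality of the form $\frac{d}{dt}\abs{y_\epsilon}_p\le K\abs{y_\epsilon}_p+C$ with $K,C$ independent of $p$ and $\epsilon$, Gronwall's lemma together with $\abs{y_\epsilon(0)}_p=\abs{x}_p\le\abs{x}_\infty\abs{\OO}^{1/p}$ gives a bound that survives $p\to\infty$; this is equivalent to exhibiting a spatially constant supersolution $M(t)$ driven by $\abs{F}_\infty$ whose growth is capped by the cubic absorption, whence $M(t)\le C(1+\abs{x}_\infty)$ and \eqref{EQN:IneqY} follows.

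The main obstacle is the cross term generated by the interaction of the multiplicative noise with the nonlinear diffusion, namely $-p\int_\OO(\gamma'(z_\epsilon)+\epsilon)e^{-pW_\epsilon}\abs{z_\epsilon}^{p-2}z_\epsilon\,\nabla W_\epsilon\cdot\nabla z_\epsilon\,d\xi$, which appears alongside the coercive part and, being of order $p$, threatens to spoil the uniformity of the Gronwall constant as $p\to\infty$. Controlling it is the heart of the proof: one must absorb it by combining the coercive dissipation coming from $-\Delta(\gamma+\epsilon\,\cdot)$ with the super-linear absorption furnished by the cubic $G_\epsilon$ (this is where the coercivity, and not merely the monotonicity, of the FitzHugh--Nagumo nonlinearity is essential), and this step requires the spatial regularity of the noise, i.e.\ sufficient smoothness of the eigenfunctions $e_n$ so that $\nabla W_\epsilon$ is controlled uniformly in $\epsilon$. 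The remaining points — the uniform bounds on $e^{\pm W_\epsilon}$ and the justification that all manipulations are legitimate for the regular solution $y_\epsilon\in W^{1,\infty}([0,T];\HU)\cap L^\infty(0,T;H^1_0)$ produced by the previous lemma — are routine under Hypothesis \ref{HYP:1}.
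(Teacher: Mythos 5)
Your overall instinct---that the bound should come from a spatially constant barrier, and that the dangerous interaction is between the multiplicative noise and the nonlinear diffusion---is sound, but the concrete mechanism you propose, a single $L^p$ energy estimate with $p$-independent Gronwall constants followed by $p\to\infty$, has a genuine gap at exactly the step you flag as ``the heart of the proof,'' and it cannot be closed with the tools you invoke. Testing \eqref{EQN:RPDEAppZ} with $e^{-pW_\epsilon}|z_\epsilon|^{p-2}z_\epsilon$ and absorbing half of the cross term into the coercive part via Young leaves the residue
\begin{equation*}
\frac{p^2}{2(p-1)}\int_{\OO}\bigl(\gamma'(z_\epsilon)+\epsilon\bigr)\,e^{-pW_\epsilon}|z_\epsilon|^{p}\,|\nabla W_\epsilon|^2\,d\xi\,,
\end{equation*}
which is of order $p$ relative to $|y_\epsilon|_p^p$; the resulting inequality $\frac{d}{dt}|y_\epsilon|_p^p\leq Cp^2|y_\epsilon|_p^p+\dots$ yields $|y_\epsilon(t)|_p\leq e^{Cpt}(\dots)$, which diverges as $p\to\infty$, so the asserted inequality $\frac{d}{dt}|y_\epsilon|_p\leq K|y_\epsilon|_p+C$ with $K,C$ independent of $p$ is simply not obtainable this way. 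Nor can the cubic rescue it: pointwise Young absorption of $p|z|^pA$ into $|z|^{p+2}$ produces additive constants of size $(pA)^{(p+2)/2}$, whose $p$-th root still grows like $\sqrt{p}$; worse, the term actually present is the Yosida approximation $G_\epsilon$, which is Lipschitz with linear growth, so its superlinear absorption saturates for $|z_\epsilon|\sim 1/\epsilon$---precisely the large values an a priori bound must control---destroying uniformity in $\epsilon$. There is also a structural obstruction independent of $p$: your estimate needs $\gamma'(z_\epsilon)$ evaluated at the (unknown) solution values, but Hypothesis \ref{HYP:1}(i) gives only a lower bound $\gamma'\geq C$, no upper growth bound, so $\sup\gamma'(z_\epsilon)$ is not available before the $L^\infty$ bound is proved.

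The paper avoids all of this by implementing the comparison directly in $L^1$, following \cite[Lemma 3.3]{BR}: with $M=1+|x|_\infty$ and an auxiliary $\alpha\in C^1([0,T])$, $\alpha(0)=0$, $\alpha'\geq 0$, one writes the equation for $y_\epsilon-M-\alpha(t)$ and tests against $\sgn\bigl(y_\epsilon-M-\alpha(t)\bigr)^+$. This is in effect the clean ``$p=\infty$'' test: no order-$p$ terms arise, the monotone reaction term $G_\epsilon$ drops out by sign, the time derivative produces $\frac{d}{dt}|(y_\epsilon-M-\alpha)^+|_1$, and---this is the decisive point your scheme misses---the elliptic contribution $J(t)$ is bounded below by $-C_M\int_0^t|(y_\epsilon-M-\alpha(s))^+|_1\,ds$ with $C_M$ involving $\gamma'\bigl(e^{|W|_\infty}M\bigr)$, $|\Delta W|_\infty$ and $|\nabla W|_\infty^2$, i.e.\ $\gamma'$ evaluated only on the fixed compact range of the barrier, so no growth assumption on $\gamma$ is needed. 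Choosing $\alpha$ so that $\tilde{F}_\epsilon\leq\alpha'$ gives $(y_\epsilon-M-\alpha)^+\equiv 0$ via Gronwall, the symmetric argument gives $y_\epsilon\geq -M-\alpha$, and \eqref{EQN:IneqY} follows; the auxiliary regularity \eqref{EQN:CondL1} under which these manipulations are performed is then removed by the approximation scheme of \cite[Lemma 3.3]{BR}. Your closing parenthetical (``equivalent to exhibiting a spatially constant supersolution'') names the right idea, but note that even there the elliptic term does not vanish on the barrier, since $W$ depends on $\xi$; the paper accounts for this through $\tilde{F}_\epsilon$ and $J(t)$ rather than through any $L^p$ limit.
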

\begin{proof}
Let $\alpha \in C^1([0,T])$, such that $\alpha(0) = 0$ and $\alpha'\geq 0$. Then, defining  $M:= (1+|x|_\infty)$, we have
\[
\begin{split}
&\frac{\partial}{\partial t} \left (y_\epsilon- M \alpha(t)\right ) + e^{-W_\epsilon} \left (G_\epsilon(e^{W_\epsilon} (y_\epsilon))- G_\epsilon(e^{W_\epsilon}(M +  \alpha(t)))\right ) - f y_\epsilon +\\
&- e^{-W_\epsilon} \Delta\left (\gamma(e^{W_\epsilon} y_\epsilon)+\epsilon e^{W_\epsilon}y_\epsilon\right ) + \\
&+ e^{-W_\epsilon}\Delta \left ( \gamma( e^{W_\epsilon}(M+\alpha(t)))+ \epsilon e^{-W_\epsilon}(M+\alpha(t))\right )+\\
&+ \mu (y_\epsilon-M-\alpha(t)) = \tilde{F}_\epsilon -\alpha'\,,
\end{split}
\]
with 
\[
\begin{split}
\tilde{F}_\epsilon &:= - e^{-W_\epsilon}G_\epsilon(e^{W_\epsilon}(M +  \alpha(t)))-(M+\alpha(t))+ f y_\epsilon +\\
&+ e^{-W_\epsilon} \Delta \gamma(e^{W_\epsilon}(M+\alpha(t))) + \epsilon(M+\alpha(t))e^{-W_\epsilon}\Delta(e^{W_\epsilon})+ e^{-W_\epsilon}F\,,
\end{split}
\]
with $\alpha$ such that $F_\epsilon - \alpha' \leq 0$.

Following \cite[Lemma 3.3]{BR}, we  first assume that
\begin{equation}\label{EQN:CondL1}
\frac{\partial}{\partial t} y_\epsilon \,,\quad \Delta\left (\gamma(e^{W_\epsilon} y_\epsilon)+\epsilon e^{W_\epsilon}y_\epsilon\right ) \in L^1((0,T)\times \OO)\,,
\end{equation}
then, denoting by
\[
\begin{split}
J(t)&:= - \int_{\OO} e^{-W_\epsilon}\left [ \left (\Delta(\gamma(e^{W_\epsilon}y_\epsilon)+\epsilon e^{W_\epsilon}y_\epsilon\right ) \right .+\\
&- \left .\Delta\left (\gamma(e^{W_\epsilon}(M+\alpha(t))) + \epsilon e^{W_\epsilon}(M+\alpha(t))\right )\right ]sign(y_\epsilon -M -\alpha(t))^+ d\xi\,,
\end{split}
\]
we have
\[
\int_0^t J(s) ds \geq - (\gamma'(e^{|W|_\infty}M)+1)e^{|W|_\infty}(|\Delta W|_\infty + |\nabla W|_\infty^2)\int_0^t |(y_\epsilon - (M+\alpha(s))))^+|_1 ds\,.
\]
Moreover, by hypothesis \ref{HYP:1}, it follows that $G_\epsilon$ is monotone, so that

{\footnotesize
\[
\begin{split}
&\int_0^t \int_{\OO} e^{-W_\epsilon} \left (G_\epsilon(e^{W_\epsilon} (y_\epsilon))- G_\epsilon(e^{W_\epsilon}(M +  \alpha(t)))\right ) sign(y_\epsilon - M - \alpha(s))^+ ds d\xi = \\
&\int_0^t \int_{\OO} e^{-W_\epsilon} \left (G_\epsilon(e^{W_\epsilon} (y_\epsilon))- G_\epsilon(e^{W_\epsilon}(M +  \alpha(t)))\right ) sign(G_\epsilon(y_\epsilon) - G_\epsilon(M + \alpha(s)))^+ ds d\xi \geq 0\,,
\end{split}
\]
}

and since
\[
\begin{split}
&\int_{\OO} \frac{\partial}{\partial t}\left (y_\epsilon - M -\alpha(s)\right ) sign(y_\epsilon - M -\alpha(s))^+ d\xi =\\
&\frac{d}{dt}|(y_\epsilon(t) - M -\alpha(t))^+|_1 \,, \mbox{a.e.} t \in (0,T)\,,
\end{split}
\]
by \cite[Lemma 3.3]{BR}, we conclude that
\[
|(y_\epsilon(t) - M -\alpha(t))^+|_1 =0\,,
\]  
if $\tilde{F}_\epsilon \leq \alpha'$ a.e. in $(0,T)\times \OO$. Moreover, for a suitable $\alpha$, it also holds 
\[
y_\epsilon \leq M+\alpha(t)\,,\quad  \mbox{a.e.}\, \mbox{in} (0,T)\times \OO\,,
\]
and
\[
y_\epsilon \geq -M-\alpha(t)\,,\quad  \mbox{a.e.}\, \mbox{in} (0,T)\times \OO\,,
\]
and  inequalities \eqref{EQN:IneqY} follows.

Using the approximating scheme described in \cite[Lemma 3.3]{BR}, we have \eqref{EQN:IneqY} without requiring the condition \eqref{EQN:CondL1}, and the claim follows.
\end{proof}

\begin{Lemma}\label{LEM:1}
Let $x \in D(A)$, then there exists an increasing function $C:[0,\infty) \to (0,\infty)$ such that
\[
\sup_{t \in [0,T]} |y_\epsilon(t)|^2_2 + \int_0^T \int_{\OO} |\nabla \gamma(e^{W_\epsilon}y_\epsilon)|^2 d\xi ds \leq C(1+|x|_\infty)\,,\quad \forall \epsilon \in (0,1]\,.
\]
\end{Lemma}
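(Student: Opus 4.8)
The plan is to prove this as a pathwise (fixed realisation of the noise) $L^{2}$--energy inequality for the approximating problem \eqref{EQN:RPDEApp}, obtained by testing the equation with the solution itself and exploiting the uniform $L^{\infty}$--bound already furnished by Lemma \ref{LEM:2}. Throughout I write $z_\epsilon:=e^{W_\epsilon}y_\epsilon$ and recall from the first lemma that, for $x\in D(A)$, one has $y_\epsilon\in W^{1,\infty}([0,T];\HU)\cap L^{\infty}(0,T;H^{1}_{0})$; thus $y_\epsilon(t)\in H^{1}_{0}$ for a.e.\ $t$ and $\partial_t y_\epsilon\in\HU$, which is precisely the regularity needed to pair \eqref{EQN:RPDEApp} (an identity in $\HU$) with $y_\epsilon(t)\in H^{1}_{0}$ in the $\HU$--$H^{1}_{0}$ duality. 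By the standard Lions--Temam identity for the Gelfand triple $H^{1}_{0}\hookrightarrow L^{2}\hookrightarrow\HU$, the time term then yields $\langle\partial_t y_\epsilon,y_\epsilon\rangle=\tfrac12\tfrac{d}{dt}|y_\epsilon|_2^2$.

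First I would dispose of the lower--order and forcing contributions. By Hypothesis \ref{HYP:1} the map $G_\epsilon$ is monotone with $G_\epsilon(0)=0$, and since $e^{W_\epsilon}>0$ one has $e^{-W_\epsilon}G_\epsilon(z_\epsilon)\,y_\epsilon\ge 0$ pointwise; likewise $\int_\OO f\,y_\epsilon^{2}\ge 0$ and $\int_\OO\mu\,y_\epsilon^{2}\ge 0$ because $f\ge 0$ and $\mu\ge 0$. All three therefore carry a favourable sign and may be discarded. The forcing is estimated directly via $\int_\OO e^{-W_\epsilon}F\,y_\epsilon\le C\,|F|_\infty|\OO|\,|y_\epsilon|_\infty$, which by Lemma \ref{LEM:2} is $\le C(1+|x|_\infty)$ after integration in time, so that no Gronwall step is needed.

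The crux is the diffusion term. Since $e^{-W_\epsilon}y_\epsilon=e^{-2W_\epsilon}z_\epsilon$ and $\gamma(z_\epsilon)+\epsilon z_\epsilon\in H^{1}_{0}$, integration by parts gives
\[
-\int_\OO e^{-W_\epsilon}\Delta\!\left(\gamma(z_\epsilon)+\epsilon z_\epsilon\right)y_\epsilon\,d\xi=\int_\OO\nabla\!\left(e^{-2W_\epsilon}z_\epsilon\right)\cdot\nabla\!\left(\gamma(z_\epsilon)+\epsilon z_\epsilon\right)d\xi .
\]
Expanding $\nabla(e^{-2W_\epsilon}z_\epsilon)=e^{-2W_\epsilon}(\nabla z_\epsilon-2z_\epsilon\nabla W_\epsilon)$ splits the right-hand side into a coercive part $\int e^{-2W_\epsilon}(\gamma'(z_\epsilon)+\epsilon)|\nabla z_\epsilon|^{2}$ and a remainder linear in $\nabla W_\epsilon$. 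The coercive part is bounded below by $\tfrac{C_W}{L}\int|\nabla\gamma(z_\epsilon)|^{2}$, where $C_W:=e^{-2|W_\epsilon|_\infty}$ and $L$ is an upper bound for $\gamma'$ on the range of $z_\epsilon$: here I use the identity $\nabla z_\epsilon\cdot\nabla\gamma(z_\epsilon)=\gamma'(z_\epsilon)|\nabla z_\epsilon|^{2}\ge L^{-1}|\nabla\gamma(z_\epsilon)|^{2}$, which is licit because Lemma \ref{LEM:2} confines $z_\epsilon$ to a fixed compact set (on which $\gamma$ is Lipschitz) while Hypothesis \ref{HYP:1}(i) keeps $\gamma'\ge C>0$. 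The remainder is handled by Young's inequality with respect to the weight $e^{-2W_\epsilon}(\gamma'(z_\epsilon)+\epsilon)$: half is absorbed into the coercive part and the rest is dominated by $C\int_\OO|\nabla W_\epsilon|^{2}$, finite and $\epsilon$--uniform thanks to the boundedness of $z_\epsilon$, of $\gamma'(z_\epsilon)$, and of $|\nabla W_\epsilon|_\infty$, $|W_\epsilon|_\infty$ — the very quantities already invoked in the proof of Lemma \ref{LEM:2}. Collecting terms, integrating over $[0,t]$, and taking the supremum yields
\[
\sup_{t\in[0,T]}|y_\epsilon(t)|_2^2+\int_0^{T}\!\!\int_\OO|\nabla\gamma(e^{W_\epsilon}y_\epsilon)|^{2}\,d\xi\,ds\le C(1+|x|_\infty),
\]
with a constant increasing in $|x|_\infty$, as it must be, since the admissible bound $L$ for $\gamma'$ grows with the $L^{\infty}$--norm of $z_\epsilon$, hence with $|x|_\infty$.

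I expect the genuine difficulty to be concentrated in the diffusion term and to be twofold: the rigorous justification of the chain rule $\nabla\gamma(z_\epsilon)=\gamma'(z_\epsilon)\nabla z_\epsilon$ together with the passage from the natural coercive quantity $\int\gamma'(z_\epsilon)|\nabla z_\epsilon|^{2}$ to the stated $\int|\nabla\gamma(z_\epsilon)|^{2}$, which hinges essentially on the $L^{\infty}$--confinement of $z_\epsilon$ from Lemma \ref{LEM:2}; and the control, $\epsilon$--uniformly, of the cross term produced by $\nabla W_\epsilon$, which rests on the assumed spatial regularity of the noise. The time--derivative identity, the sign of the lower-order terms, and the forcing estimate are then routine once the $\HU$--$H^{1}_{0}$ functional setting is fixed.
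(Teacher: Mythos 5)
Your treatment of the diffusion term contains a genuine gap, and it sits exactly where you located the ``main difficulty''. Everything hinges on an upper bound $L$ for $\gamma'$ on the range of $z_\epsilon=e^{W_\epsilon}y_\epsilon$, which you justify by saying that Lemma \ref{LEM:2} confines $z_\epsilon$ to a fixed compact set ``on which $\gamma$ is Lipschitz''. That implication is false: Hypothesis \ref{HYP:1}(i) asks only that $\gamma$ be \emph{continuous} and strongly monotone from below, and a continuous increasing function need not be Lipschitz on compacts. For instance $\gamma(r)=r+r^{1/3}$ satisfies (i) with $C=1$ and $\gamma(0)=0$, yet $\gamma'(r)\to\infty$ as $r\to 0$, so on any compact set containing the origin your $L$ is infinite; both the coercivity transfer $\gamma'(z_\epsilon)|\nabla z_\epsilon|^{2}\ge L^{-1}|\nabla\gamma(z_\epsilon)|^{2}$ and your cross-term estimate (which again invokes ``boundedness of $\gamma'(z_\epsilon)$'') collapse. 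Note that the lower bound $\gamma'\ge C$ supplied by (i) works in the wrong direction here: testing with $y_\epsilon$ yields the coercive quantity $\int\gamma'(z_\epsilon)|\nabla z_\epsilon|^{2}$, which controls $\int|\nabla z_\epsilon|^{2}$, not the quantity $\int|\nabla\gamma(z_\epsilon)|^{2}=\int(\gamma'(z_\epsilon))^{2}|\nabla z_\epsilon|^{2}$ appearing in the statement.

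The paper avoids this by choosing a different test function: it multiplies \eqref{EQN:RPDEApp} by $\gamma(y_\epsilon)$ rather than by $y_\epsilon$. The time term then integrates to $\int_{\OO} j(y_\epsilon(t))\,d\xi$ with $j(r)=\int_0^r\gamma(s)\,ds\ \ge\ \tfrac{C}{2}r^2$ by Hypothesis \ref{HYP:1}(i), which is what produces the $\sup_{t}|y_\epsilon(t)|_2^2$ bound; and the diffusion pairing becomes
\[
\int_{\OO}\nabla\gamma(e^{W_\epsilon}y_\epsilon)\cdot\nabla\left(e^{-W_\epsilon}\gamma(y_\epsilon)\right)d\xi\,,
\]
so that $\nabla\gamma(e^{W_\epsilon}y_\epsilon)$ appears essentially squared, up to bounded weights and $\nabla W_\epsilon$-cross terms handled as in \cite[Lemma 3.3]{BR} --- no division by $\sup\gamma'$ is ever performed, and the test function $\gamma(y_\epsilon)$ is bounded using only continuity of $\gamma$ together with the $L^\infty$ bound of Lemma \ref{LEM:2}. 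The rest of your argument is sound and matches the paper in spirit: the Lions--Temam duality step, discarding the $G_\epsilon$, $f$ and $\mu$ terms by monotonicity and sign, and the direct forcing estimate are all fine. Moreover, if Hypothesis \ref{HYP:1}(i) were strengthened to $\gamma\in C^1$ with locally bounded derivative --- regularity the paper itself tacitly uses elsewhere, e.g.\ the factor $\gamma'(e^{|W|_\infty}M)$ in the proof of Lemma \ref{LEM:2} and $\gamma'$ in the adjoint system \eqref{EQN:DualBack} --- your more elementary $y_\epsilon$-test would go through; but as a proof under the stated hypotheses it fails at the step identified above.
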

\begin{proof}
In what follows we will use the following
\[
\begin{split}
&\int_{\OO} j(y_\epsilon(t)) d\xi = \int_0^t {}_{\HU} \left \langle \frac{d y_\epsilon}{ds}(s),\gamma(y_\epsilon(s))\right \rangle_{H^1_0} ds + \int_{\OO} j(x) d\xi\,,\\
&{}_{\HU} \left \langle \Delta \gamma(e^{W_\epsilon}y_\epsilon), e^{-W_\epsilon}\gamma(y_\epsilon) \right \rangle= - \int_{\OO} \nabla \gamma(e^{W_\epsilon}y_\epsilon) \cdot \nabla(e^{-W_\epsilon}\gamma(y_\epsilon)) d\xi\,,
\end{split}\,,
\]
with $j(r) = \int_0^r \gamma(s)ds$, $r \in \RR^+$.

Thus, multiplying equation \eqref{EQN:RPDEApp} by $\gamma(y_\epsilon)$ and integrating over $(0,t) \times \OO$ we obtain
\begin{equation}\label{EQN:BouGg}
\begin{split}
&\int_{\OO} j(y_\epsilon(t)) d\xi + \int_0^t \int_{\OO} \left [ \nabla \gamma(e^{W_\epsilon}y_\epsilon) + \epsilon \nabla(e^{W_\epsilon}y_\epsilon)) \cdot \nabla(\gamma(y_\epsilon)e^{-W_\epsilon})\right ] d\xi ds \leq \\
&\leq \int_{\OO} j(x) d\xi - \int_0^t \int_{\OO} \left (e^{-W_\epsilon} G_\epsilon(e^{W_\epsilon} y_\epsilon) + f y_\epsilon\right )\gamma(y_\epsilon) d\xi ds\,.
\end{split}
\end{equation}

Concerning the last integral in the right hand side of equation \eqref{EQN:BouGg}, we have recalling that $G_\epsilon$ is the Yosida approximant of $G$ and using the monotonicity of $\gamma$ and $G_\epsilon$ that Lemma \ref{LEM:2},

\begin{equation}\label{EQN:BouGg2}
\begin{split}
&\int_0^t \int_{\OO} \left (e^{-W_\epsilon} G_\epsilon(e^{W_\epsilon} y_\epsilon) + f y_\epsilon \right )\gamma(y_\epsilon) d\xi ds \geq  \\
&\geq C\int_0^t \int_{\OO} e^{-W_\epsilon} G_\epsilon(e^{W_\epsilon} y_\epsilon)y_\epsilon d\xi ds \geq  \\
&\geq C\int_0^t \int_{\OO} e^{-W_\epsilon} y_\epsilon d\xi ds \,,
\end{split}
\end{equation}
while the other terms in equation \eqref{EQN:BouGg} can be studied as done in \cite[Lemma 3.3]{BR}, so that the claim follows by Lemma \ref{LEM:2}. 
\end{proof}

\begin{Lemma}\label{LEM:E!RPDE}
There is a unique solution to equation \eqref{EQN:RPDE} with

\begin{equation}\label{EQN:BoundSolY}
\begin{split}
&y \in W^{1,2}([0,T];\HU) \cap L^\infty((0,T) \times \OO)\,,\\
&y \in L^2(0,T;\HU)\,,\quad \gamma(e^W y) \in L^2(0,T;H^1_0)\,.
\end{split}
\end{equation}

Moreover, the process $y$ is $\left (\mathcal{F}_t\right )_{t \geq 0}-$adapted.

\end{Lemma}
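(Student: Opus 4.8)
The plan is to obtain $y$ as a limit of the approximating solutions $y_\eps$ constructed in the previous lemmas, using the uniform a priori bounds to extract convergent subsequences and then identifying the limit of each nonlinear term. First I would collect the estimates. Lemma \ref{LEM:2} gives $\sup_\eps |y_\eps|_{L^\infty((0,T)\times\OO)}\leq C(1+|x|_\infty)$, so $(y_\eps)$ is bounded in $L^\infty((0,T)\times\OO)$, hence in $L^2(0,T;L^2)$; note that the coefficient $\mu=\tfrac12\sum_n\mu_n^2 e_n^2$ is a bounded multiplier by Hypothesis \ref{HYP:1}(iii). Lemma \ref{LEM:1} bounds $\sup_t|y_\eps(t)|_2^2$ together with $\int_0^T\!\int_\OO|\nabla\gamma(e^{W_\eps}y_\eps)|^2\,d\xi\,ds$, so $(\gamma(e^{W_\eps}y_\eps))$ is bounded in $L^2(0,T;H^1_0)$. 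Invoking the strong monotonicity of Hypothesis \ref{HYP:1}(i), which renders the inverse of $r\mapsto\gamma(r)$ Lipschitz, together with the uniform convergence $W_\eps\to W$, I would transfer this gradient bound to $e^{W_\eps}y_\eps$ and hence to $(y_\eps)$ in $L^2(0,T;H^1)$. Reading $\partial_t y_\eps$ off equation \eqref{EQN:RPDEApp} and majorizing each term by the preceding estimates finally gives $(\partial_t y_\eps)$ bounded in $L^2(0,T;\HU)$.

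With these bounds I would pass to a subsequence along which $y_\eps\rightharpoonup y$ weakly-$*$ in $L^\infty((0,T)\times\OO)$ and weakly in $L^2(0,T;H^1)$, $\partial_t y_\eps\rightharpoonup\partial_t y$ weakly in $L^2(0,T;\HU)$, and $\gamma(e^{W_\eps}y_\eps)\rightharpoonup\zeta$ weakly in $L^2(0,T;H^1_0)$. The Aubin--Lions--Simon lemma, applied through the chain $H^1\hookrightarrow L^2\hookrightarrow\HU$ with the first embedding compact, upgrades this to strong convergence $y_\eps\to y$ in $L^2(0,T;L^2)$ and, along a further subsequence, a.e.\ on $(0,T)\times\OO$. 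Combining a.e.\ convergence with the uniform convergence $W_\eps\to W$, the continuity of $\gamma$, and the dominated convergence theorem (with the $L^\infty$ bound of Lemma \ref{LEM:2} supplying the domination) identifies $\zeta=\gamma(e^W y)$; likewise $G_\eps(e^{W_\eps}y_\eps)\to G(e^W y)$, using that $G_\eps\to G$ uniformly on compacts. The regularizing term $\eps\,e^{W_\eps}y_\eps$ is controlled by the $H^1$ bound and vanishes in $L^2(0,T;L^2)$, while the diffusion term passes to the limit as a product of the weak limit $\Delta\zeta$ with the uniformly convergent multiplier $e^{-W_\eps}$. This shows $y$ solves \eqref{EQN:RPDE} with the regularity \eqref{EQN:BoundSolY}.

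Uniqueness I would prove directly on \eqref{EQN:RPDE}: given two solutions $y_1,y_2$, I subtract the equations and test the difference, in the $\langle\cdot,\cdot\rangle_{-1}$ scalar product, against $\gamma(e^W y_1)-\gamma(e^W y_2)$ (equivalently against the difference of the associated $z$-variables), exactly as in the proof of the approximating lemma. Using that $z\mapsto-\Delta\gamma(z)$ is accretive in $\HU$ and that $G$ is monotone, the nonlinear contributions have a favourable sign, and a Gronwall argument absorbing the Lipschitz contributions of $f$ and $\mu$ forces $y_1=y_2$. Adaptedness is inherited from the construction: each $\beta^\eps$, and therefore each $W_\eps$ and each $y_\eps$, is $\left(\mathcal{F}_t\right)$-adapted, and adaptedness is stable under the weak and a.e.\ limits above, so $y$ is $\left(\mathcal{F}_t\right)_{t\geq0}$-adapted.

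The point I expect to be the main obstacle is the identification of the weak limit of the nonlinear diffusion term: weak convergence of $\gamma(e^{W_\eps}y_\eps)$ in $L^2(0,T;H^1_0)$ is by itself insufficient to conclude $\zeta=\gamma(e^W y)$. The whole argument therefore hinges on extracting genuine strong (a.e.) compactness of $y_\eps$ via Aubin--Lions, which in turn requires that the transfer of the gradient bound through the strongly monotone $\gamma$ be carried out with constants independent of $\eps$. Should that compactness prove delicate, the fallback is a Minty-type monotonicity argument exploiting the accretivity of $-\Delta\gamma$; however, the uniform $L^\infty$ control from Lemma \ref{LEM:2} makes the compactness route the cleaner one.
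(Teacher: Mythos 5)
Your existence half is essentially the paper's proof: the same uniform bounds from Lemmas \ref{LEM:2} and \ref{LEM:1}, Aubin compactness through $H^1_0\hookrightarrow L^2\hookrightarrow \HU$ with $\left(\frac{dy_\eps}{dt}\right)$ bounded in $L^2(0,T;\HU)$, and passage to the limit in \eqref{EQN:RPDEApp}. Your identification of the limits $\gamma(e^Wy)$ and $G(e^Wy)$ by a.e.\ convergence plus dominated convergence (with the $L^\infty$ bound of Lemma \ref{LEM:2} as dominating function) is a legitimate, slightly more elementary variant of the paper's identification via maximal monotonicity and closedness of the graphs, where the paper additionally uses the resolvent estimate $|(1+\eps G)^{-1}(e^{W_\eps}y_\eps)-e^{W_\eps}y_\eps|\leq \eps|G_\eps(e^{W_\eps}y_\eps)|\leq C\eps$. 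On adaptedness, one point you gloss over and the paper uses explicitly: the subsequence extracted by compactness could a priori depend on $\omega$, and it is precisely the uniqueness of the limit that upgrades subsequential convergence to convergence of the full sequence for each fixed $\omega$, which is what makes the limit a well-defined $\left(\mathcal{F}_t\right)$-adapted process.

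The genuine gap is in your uniqueness argument. You claim that, testing the difference equation in the $\langle\cdot,\cdot\rangle_{-1}$ product, ``the nonlinear contributions have a favourable sign'' because $G$ is monotone. This is false: monotonicity of the Nemytskii operator $G$ yields a sign only against the test function $\bar y$ itself, i.e.\ in the $L^2$ pairing; against $(-\Delta)^{-1}\bar y$ there is no sign, and the paper stresses at the start of Section \ref{SEC:E!} that $G$ is \emph{not} m-accretive on $\HU$ --- this failure is the very reason the rescaling/$L^1$ machinery is needed at all (and for the FHN cubic $G(v)=v(v-a)(v-1)$, monotonicity on all of $\RR$ already fails). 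The paper's actual argument in \eqref{EQN:StimaF} is different: write $G(e^Wy_1)-G(e^Wy_2)=G'(\vartheta)\,e^W\bar y$ by the mean value theorem, use the $L^\infty((0,T)\times\OO)$ bound on solutions (Lemma \ref{LEM:2}, inherited by the limit class \eqref{EQN:BoundSolY}) to get $|G'(\vartheta)|\leq C$, and bound the term by $C|\bar y|_2|\bar y|_{-1}$; the $|\bar y|_2^2$ produced by Young's inequality is then absorbed into the coercive term $\int_0^t\int_\OO \eta\,\bar y^2\,d\xi\,ds$ on the left-hand side, which satisfies $\eta\geq C>0$ by the strong monotonicity of Hypothesis \ref{HYP:1}(i), before Gronwall closes the estimate in $|\bar y|_{-1}^2$. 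Two consequences you should note: uniqueness is thereby proved only within the class \eqref{EQN:BoundSolY} (the $L^\infty$ bound is essential, not decorative), and your fallback of ``Gronwall absorbing the Lipschitz contributions of $f$ and $\mu$'' cannot handle the $G$ term without this $L^\infty$-plus-coercivity mechanism, since $G$ itself is not Lipschitz and has no sign in $\HU$.
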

\begin{proof}
Let us first prove uniqueness. Let $y_1$ and $y_2$ be two solutions to equation \eqref{EQN:RPDE}, and let $\bar{y} := y_1 - y_2$. Then it holds 
\begin{equation}\label{EQN:RPDE!}
\begin{split}
&\frac{\partial}{\partial t} \bar{y} + e^{-W} \left (G(e^W y_1)-G(e^W y_2)\right ) + f \bar{y}+ \\
&\qquad - e^{-W} \Delta\left (\gamma(e^W y_1) - \gamma(e^W y_2)\right ) + \mu \bar{y} = 0\, \quad \mbox{in} \quad (0,T) \times \OO\\
&\bar{y}(0,\xi) = 0\,,\quad \xi \in \OO\,.\\
\end{split}
\end{equation}

We can rewrite equation \eqref{EQN:RPDE!} as
\begin{equation}\label{EQN:RPDE!2}
\begin{split}
&\frac{\partial}{\partial t} \bar{y} + (-\Delta)(\bar{y}\eta)= - e^{-W} \left (G(e^W y_1)-G(e^W y_2)\right ) - e^{-W} \Delta(e^{-W})\bar{y}\eta+\\
&- f \bar{y} - 2 \nabla(e^{-W}) \cdot \nabla(e^W \bar{y}\eta)- \mu \bar{y} = 0\,,\\
\end{split}
\end{equation}
where we have denoted for short
\[
\eta := 
\begin{cases}
\frac{\left (\gamma(e^W y_1) - \gamma(e^W y_2)\right )}{e^W \bar{y}} & \{(t,\xi) \,:\, \bar{y}(t,\xi) \not = 0\}\,,\\
0 & \{(t,\xi) \,:\, \bar{y}(t,\xi) = 0\}\,.\\
\end{cases}
\]

Multiplying equation \eqref{EQN:RPDE!2} by $(-\Delta)^{-1}\bar{y}$, we obtain

\begin{equation}\label{EQN:StimaF}
\begin{split}
&\frac{1}{2}|\bar{y}|_{-1}^2 + \int_0^t \int_{\OO} \eta \bar{y}^2 ds d\xi =\\
&= \int_0^t \int_{\OO} e^{-W} \left (G(e^W y_1)-G(e^W y_2)\right )(-\Delta)^{-1}\bar{y} ds d\xi +\\
&- \int_0^t \int_{\OO} e^W \Delta(e^{-W})\bar{y}\eta (-\Delta)^{-1}\bar{y} ds d\xi+\\
&-2 \int_0^t \int_{\OO} \nabla(e^{-W}) \cdot \nabla(e^W \bar{y}\eta)(-\Delta)^{-1}\bar{y} ds d\xi+\\
&-\int_0^t \int_{\OO} f  \bar{y}(-\Delta)^{-1}\bar{y} ds d\xi - \int_0^t \int_{\OO} \mu  \bar{y}(-\Delta)^{-1}\bar{y} ds d\xi +\,.
\end{split}
\end{equation}

Concerning the first integral in the right hand side of equation \eqref{EQN:StimaF}, we have using the fact that, for $\alpha \in [0,1]$ it holds
\[
G(e^W y_1)-G(e^W y_2) = G'(\alpha e^W y_1 + (1-\alpha) e^W y_2) e^W \bar{y}\,,
\]
we infer that, denoting for short $\vartheta = \alpha e^W y_1 + (1-\alpha) e^W y_2$,
\[
\begin{split}
&\left | \int_{\OO} e^{-W} \left (G(e^W y_1)-G(e^W y_2)\right )(-\Delta)^{-1}\bar{y} d\xi\right |=\\
&=\left | \int_{\OO} G'(\vartheta)\bar{y}(-\Delta)^{-1}\bar{y} d\xi\right | \leq C |\bar{y}|_2 |\bar{y}|_{-1}\,,
\end{split}
\]
whereas other terms can be treated as in \cite[Theorem 2.2]{BR}. So that, we have
\[
\frac{d}{dt}|\bar{y}|_{-1}^2 \leq C |\bar{y}|_{-1}^2 \,, \, \mbox{a.e.}\, t >0\,,
\]
from which it follows that $\bar{y} = 0$, and, by Lemma \ref{LEM:1}, it holds
\[
|y(t)|_\infty + \int_0^t \int_{\OO} |\nabla \gamma(y(s))|^2 d\xi ds \leq C(1+ |x|_\infty)\,,
\]
so that, see \cite[Theorem 2.2]{BR}, we further have
\[
\begin{split}
&y \in W^{1,2}([0,T];\HU) \cap L^\infty((0,T) \times \OO)\,.\\
\end{split}
\]

As regard existence, by Lemma \ref{LEM:1}--\ref{LEM:2}, we have that $(\gamma(e^{W_\epsilon}y_\epsilon))$ is bounded in $L^2(0,T;H^1_0)$, $(y_\epsilon)$ is bounded in $L^\infty(0,T;L^2) \cap L^\infty ((0,T) \times \OO) \cap L^2(0,T;H^1_0)$, and  $\left (\frac{d y_\epsilon}{dt}\right )$ is bounded in $L^2(0,T;\HU)$. Thus, by Aubin compactness theorem, $(y_\epsilon)$ is compact in each $L^2(0,T;L^2(\OO))$.
It follows that, for fixed $\omega \in \Omega$, along a subsequence, which we still denote by $\{\epsilon\} \to 0$ for the sake of clarity, we have
\begin{equation}\label{EQN:ConSubs}
\begin{split}
y_\epsilon \to y & \quad \mbox{strongly}\, \mbox{in} \, L^2((0,T);L^2)\,,\\
& \quad\mbox{weak-star}\, \mbox{in} \, L^2((0,T);L^2)\,,\\
& \quad\mbox{strongly}\, \mbox{in} \, L^\infty((0,T)\times \OO)\,,\\
& \quad\mbox{weakly}\, \mbox{in} \, L^2((0,T);H^1_0)\,,\\
\gamma(e^{W_\epsilon}y_\epsilon) \to \eta & \quad \mbox{weakly}\, \mbox{in} \, L^2((0,T);H^1_0)\,,\\
\frac{d y_\epsilon}{dt} \to \frac{d y}{dt} & \quad \mbox{weakly}\, \mbox{in} \, L^2((0,T);\HU)\,,\\
W_\epsilon \to W & \quad \mbox{in} \, C((0,T)\times \OO)\,.\\
\end{split}
\end{equation} 

Since the map $z \mapsto \gamma(z)$ is maximal monotone, by \eqref{EQN:ConSubs} we have that $\eta = \gamma(e^Wy)$.

Then, since it holds 
\[
|(1+\epsilon G)^{-1}(e^{W_\epsilon} y_\epsilon) - e^{W_\epsilon} y_\epsilon|\leq \epsilon |G_\epsilon(e^{W_\epsilon} y_\epsilon)| \leq C \epsilon\,,
\]
and 
\[
(1+\epsilon G)^{-1}(e^{W_\epsilon} y_\epsilon) \to y  \quad \mbox{strongly}\, \mbox{in} \, L^2((0,T);L^2)\,,\\
\]
then, for $\epsilon \rightarrow 0$, we get
\begin{equation}\label{EQN:ConvGe}
\begin{split}
G_\epsilon \left (e^{W_\epsilon} y_\epsilon\right ) \to \zeta & \quad \mbox{weakly}\, \mbox{in} \, L^2((0,T);H^1_0)\,.\\
\end{split}
\end{equation}

Thus, again from the fact that $G:\RR \to \RR$ is maximal monotone it follows that it is also closed and therefore we have that $\zeta = G(e^W y)$.

Therefore, by letting $\epsilon \to 0$, from equation \eqref{EQN:RPDEApp} we obtain
\[
\begin{split}
&\frac{dy}{dt} + e^{-W} G(e^Wy) + fy - e^{-W} \Delta \gamma(e^Wy) + \mu y = e^{-W} F\,,\quad \mbox{in} \, (0,T) \times \OO\,,\\
&y(0)=x\,.
\end{split}
\]

Then, by the  uniqueness result already proved,
we also have that the sequence $(y_\epsilon)$ is independent of $\omega \in \Omega$, implying that 
$y$ is $\left (\mathcal{F}_t\right )-$adapted, ending the proof.
\end{proof}

We can finally prove that it exists a unique strong solution $X$ to equation \eqref{EQN:MainFHN} which satisfies
\[
Xe^{-W} \in W^{1,2}([0,T];\HU)\,\quad \mathbb{P}-a.s.
\]

\begin{proof}[Proof of Theorem \ref{THM:E!}]
Using \cite[Lemma 8.1]{BR2} we have the equivalence between the stochastic PDE \ref{EQN:MainFHN} and the random PDE \ref{EQN:RPDE} via the rescaling transformation \ref{EQN:Resca}, so that existence and uniqueness of a solution $X$ in the sense of Definition \ref{DEF:SS} follows by Lemma \ref{LEM:E!RPDE}.
\end{proof}

\section{The optimal control problem}\label{SEC:OC}

In this section we will focus the attention to a controlled version of equation \eqref{EQN:RDFHN}. We  denote by $\mathcal{X}=L^2_{ad}\left ((0,T) \times  \OO\right )$ the space of all $\FT - $adapted processes $u : [0,T ] \to \RR^d$, and 
%
we consider the following optimal control problem 
\begin{equation}\label{EQN:P}
  \tag{P}
\mbox{Minimize} \,  \mathbb{E} \left [ \int_0^T \int_{\OO} \left |v(t,\xi) -v_1(\xi)\right |^2  + \frac{\alpha}{2} |u(t,\xi)|^2 d\xi dt + \int_{\OO} \left |v(T,\xi) -v_2(\xi)\right |^2 d\xi \right ]\,,
\end{equation}
subject to $u \in \mathcal{U}$ and

{\footnotesize
\begin{equation} \label{EQN:CFHN}
\begin{cases}
\partial_t v(t,\xi) - \Delta \gamma(v(t,\xi)) + I^{ion} (v(t,\xi)) + f(\xi) v(t,\xi) = u(t,\xi) +  v(t,\xi) \partial_t W(t) \;, \mbox{in}\, (0,T)\times \mathcal{O}\\
v(0,\xi) =v_0(\xi)\,,\quad \xi \in \OO \;, \\
v(t,\xi) = 0 \;, \text{on} \; (0,T) \times \partial \mathcal{O} \;.
\end{cases}
\end{equation}
}

Here
\[
\begin{split}
\mathcal{U} &:= \left \{ u \in L^2_{ad}((0,T)\times \OO \times \Omega) \right .\,:\\ 
&\left . \qquad \quad   \,|u(t,\xi,\omega)|\leq M\,\quad \mbox{a.e.} (t,\xi,\omega) \in (0,T)\times \OO\times \Omega\right \}\,,
\end{split}
\]
$M>0$ being a suitable constant, while $v_1$, $v_2 \in L^2_{\mathcal{F}_0}(\Omega)$ and $\alpha>0$ are given.


In what follows we are going to treat the problem (P) by a {\it rescaling procedure} which allows us to reduce it to a random optimal control problem.

\begin{Theorem}\label{THM:E!4}
Let hypothesis \ref{HYP:1} holds, then, for $T$ sufficiently small, there exists at least one optimal pair $ \left (u^*,v^*\right ) $ solution to problem \eqref{EQN:P}.
\end{Theorem}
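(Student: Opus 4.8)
The plan is to mirror the strategy of Section \ref{SEC:E!}: first reduce the stochastic control problem \eqref{EQN:P} to a \emph{random} (pathwise) optimal control problem via the rescaling $v=e^{W}y$, and then attack the reduced problem by the direct method, using the uniform a priori bounds of Lemmas \ref{LEM:2}--\ref{LEM:1} to supply the compactness that the cubic nonlinearity would otherwise destroy.

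First I would rescale. Setting $v=e^{W}y$ and arguing as in the derivation of \eqref{EQN:RPDE}, the controlled equation \eqref{EQN:CFHN} turns into the random controlled PDE
\[
\frac{\partial}{\partial t}y+e^{-W}G(e^{W}y)-e^{-W}\Delta\gamma(e^{W}y)+fy+\mu y=e^{-W}u\,,\qquad y(0)=v_0\,,
\]
which, by Lemma \ref{LEM:E!RPDE} applied with forcing $e^{-W}u$ in place of $e^{-W}F$, admits for each $u\in\mathcal{U}$ a unique adapted solution $y=y(u)$ in the class \eqref{EQN:BoundSolY}. Substituting $v=e^{W}y$ in the cost rewrites \eqref{EQN:P} as the minimization over $u\in\mathcal{U}$ of
\[
\tilde J(u):=\mathbb{E}\left[\int_0^T\!\!\int_{\OO}\Bigl(|e^{W}y(u)-v_1|^2+\tfrac{\alpha}{2}|u|^2\Bigr)d\xi\,dt+\int_{\OO}|e^{W(T)}y(u)(T)-v_2|^2\,d\xi\right],
\]
so that an optimal pair for the reduced problem yields, through $v^*=e^{W}y(u^*)$, an optimal pair for \eqref{EQN:P}.

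Next I would run the direct method. Since $\tilde J\geq 0$, I would pick a minimizing sequence $(u_n)\subset\mathcal{U}$ and set $y_n:=y(u_n)$. As $\mathcal{U}$ is convex, closed and bounded in $L^2_{ad}((0,T)\times\OO\times\Omega)$, it is weakly compact, so along a subsequence $u_n\rightharpoonup u^*$, with $u^*\in\mathcal{U}$ (adaptedness and the bound $|u|\leq M$ being preserved under weak limits). Because all controls are bounded by $M$, the estimates of Lemmas \ref{LEM:2} and \ref{LEM:1} hold uniformly in $n$: $(y_n)$ is bounded in $L^\infty((0,T)\times\OO)\cap L^2(0,T;H^1_0)$, $(\gamma(e^{W}y_n))$ in $L^2(0,T;H^1_0)$, and $(\tfrac{d}{dt}y_n)$ in $L^2(0,T;\HU)$, $\mathbb{P}$-a.s. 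By the Aubin compactness theorem I would extract, as in Lemma \ref{LEM:E!RPDE}, a further subsequence with $y_n\to y^*$ strongly in $L^2((0,T)\times\OO)$ and weakly in $L^2(0,T;H^1_0)$; the strong $L^2$ convergence together with the uniform $L^\infty$ bound then lets me pass to the limit in the nonlinear terms exactly as there, giving $\gamma(e^{W}y_n)\to\gamma(e^{W}y^*)$ and $G(e^{W}y_n)\to G(e^{W}y^*)$, whence $y^*=y(u^*)$; uniqueness of the state makes the limit $\omega$-independent and keeps the whole construction adapted.

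I would then conclude by lower semicontinuity: the tracking terms converge by dominated convergence (uniform $L^\infty$ bound), while the penalty $u\mapsto\tfrac{\alpha}{2}\mathbb{E}\!\int\!\!\int|u|^2$ is convex and continuous, hence weakly lower semicontinuous, so $\tilde J(u^*)\leq\liminf_n\tilde J(u_n)=\inf_{\mathcal{U}}\tilde J$ and $(u^*,y^*)$ is optimal; undoing the rescaling produces $(u^*,v^*)$ with $v^*=e^{W}y^*$. I expect the main obstacle to be precisely the upgrade from weak convergence of the controls to \emph{strong} convergence of the states: the cubic term $G$ makes $\tilde J$ nonconvex, so weak control convergence by itself is useless and everything rests on the Aubin-type compactness and on identifying the nonlinear limits. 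This is also where the smallness of $T$ is used, namely to make the a priori bounds for $y(u)$ uniform over $u\in\mathcal{U}$ and the control-to-state map stable. Should this direct argument prove too fragile, the very same compactness feeds into an application of Ekeland's variational principle to $\tilde J$ on the complete metric space $\mathcal{U}$, yielding approximate minimizers whose limit, extracted by the same bounds, is a genuine optimal control.
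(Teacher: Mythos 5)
Your rescaling step and your use of Lemmas \ref{LEM:2}--\ref{LEM:1} match the paper, but the core of your argument --- the direct method with a weakly convergent minimizing sequence of controls --- has a genuine gap at the identification step ``whence $y^*=y(u^*)$'', and it is precisely the gap that leads the paper to a different proof. Your Aubin compactness argument is \emph{pathwise}: for each fixed $\omega$ you extract a subsequence along which $y_n(\omega)$ converges strongly, and this subsequence depends on $\omega$. Meanwhile your control convergence $u_n\rightharpoonup u^*$ holds only in the product space $L^2(\Omega\times(0,T)\times\OO)$; it does not yield, even along a further subsequence, $\omega$-wise weak convergence $u_n(\omega)\rightharpoonup u^*(\omega)$ in $L^2((0,T)\times\OO)$ for a.e.\ $\omega$. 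So when you pass to the limit pathwise in the state equation you cannot name the forcing term in the limit equation: the pathwise limit state solves the equation driven by \emph{some} pathwise cluster value of the controls, which you cannot tie to $u^*$, and you cannot invoke uniqueness of the state map to de-randomize the subsequence because you do not know which control drives the limit. In Lemma \ref{LEM:E!RPDE} this issue never arises, since the forcing $F$ there is a single fixed process; the nonlinearity of $u\mapsto y(u)$ (through the cubic $G$ and $\gamma$) is exactly why the paper states that the ``standard minimization argument does not apply'' and abandons your route.

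The paper's actual proof replaces weak compactness by \emph{strong} convergence of a special minimizing sequence: Ekeland's variational principle produces approximate minimizers $u_\epsilon$ which, via the maximum principle, satisfy the coupled forward--backward system \eqref{EQN:MaxPEk} with adjoint states $p_\epsilon$ and $u_\epsilon=\frac{1}{\alpha}e^{-W}(p_\epsilon+\theta_\epsilon)$, $|\theta_\epsilon|\leq\sqrt{\epsilon}$; subtracting two such systems and combining the Gronwall estimates for the state with the backward estimate for the adjoint yields \eqref{EQN:EstComb}, and for $T$ small the cross terms can be absorbed, giving the Cauchy property \eqref{EQN:EstFinal} and hence strong convergence of $(y_\epsilon,p_\epsilon)$ and of $u_\epsilon$, which makes the identification of the limit state trivial. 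This also exposes a secondary error in your proposal: you claim the smallness of $T$ serves ``to make the a priori bounds for $y(u)$ uniform over $u\in\mathcal{U}$'', but those bounds (Lemmas \ref{LEM:2}--\ref{LEM:1}) hold for every $T$, uniformly over $|u|\leq M$; the smallness of $T$ is used only to close the forward--backward estimate \eqref{EQN:EstComb} --- tellingly, your direct-method argument never actually uses it. Your closing remark that one could ``instead'' apply Ekeland with ``the very same compactness'' misses the point for the same reason: Ekeland alone just gives another minimizing sequence, facing the same identification problem; the decisive extra input in the paper is the adjoint equation and the smallness-of-$T$ Cauchy estimate, not the state bounds.
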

\begin{proof}
As in Section \ref{SEC:E!},
we will apply the rescaling transformation $y := e^{-W}v$ so that the optimal control problem \eqref{EQN:P} reads
\begin{equation}\label{EQN:P2}  \tag{P2}
\begin{split}
\mbox{Minimize} \, &\mathbb{E} \left [\int_0^T \int_{\OO} \left |e^W y(t,\xi) -v_1(\xi)\right |^2  + \frac{\alpha}{2} |u(t,\xi)|^2 d\xi dt \right ] +\\
&+ \mathbb{E} \left [\int_{\OO} \left |e^W y((T,\xi) -v_2(\xi)\right |^2 d\xi \right ] \,,
\end{split}
\end{equation}
subject to
\begin{equation} \label{EQN:CFHNPDE}
\begin{split}
&\partial_t y - e^{-W} \Delta \gamma(e^W y) + e^{-W} G(e^W y)+fy+\mu y = e^{-W}u \;, \mbox{in}\, (0,T)\times \mathcal{O}\,,\\
& y = 0 \,,\quad \mbox{in} \quad (0,T) \times \partial \OO\,.
\end{split}
\end{equation}

Existence and uniqueness for a solution to equation \eqref{EQN:CFHNPDE} follows from Lemma \ref{LEM:E!RPDE}.

Applying Ekeland’s variational principle, see, e.g. \cite{Eke} or also \cite{CDPFHN,BCDP}, there exists a sequence $\{u_\epsilon\}\subset \mathcal{U}$ such that
\begin{equation}\label{novemberrain1}
\begin{split}
\Psi(u_\epsilon) &\leq \inf \{ \Psi(u) \, ; u \in \mathcal{U}\}+ \epsilon\, ,\\
\Psi(u_\epsilon) &\leq \Psi(u) + \sqrt{\epsilon}\left |u_\epsilon - u\right |\, ,\quad \forall \, u \in \mathcal{U}\, ,
\end{split}
\end{equation}
or equivalently it holds
\begin{equation}\label{novemberrain2}
u_\epsilon = \arg \min_{u \in \mathcal{U}} \{  \Psi(u) + \sqrt{\epsilon}\left |u_\epsilon - u\right |_{\mathcal{U}} \}\, .
\end{equation}

By the standard maximum principle for problem \eqref{novemberrain1}, we have
\begin{equation}\label{EQN:MaxPEk}
\begin{cases}
&\partial_t y_\epsilon - e^{-W} \Delta \gamma (e^W y_\epsilon) + e^{-W} G(e^W y_\epsilon) + fy_\epsilon+\mu y_\epsilon = \frac{1}{\alpha} e^{-W}(p_\epsilon + \theta_\epsilon)\,\\
&\partial_t p_\epsilon + e^{W} \gamma'(e^W y_\epsilon) \Delta(e^{-W}p_\epsilon) + e^W G'(e^W y_\epsilon)p_\epsilon - f p_\epsilon - \mu p_\epsilon = 2(e^W y_\epsilon -v_1)\,\\
&y_\epsilon (0) = y_0 \,,\quad p_\epsilon(T) = 2 (e^W y_\epsilon(T) - v_2)\,,\\
\end{cases}
\end{equation}

where $|\theta_\epsilon|_{L^2(\Omega \times \OO \times (0,T))}\leq \sqrt{\epsilon}$.
Indeed, by \eqref{novemberrain1}, it follows $\Psi'(u_\epsilon) = \theta_\epsilon$, yelding \eqref{EQN:MaxPEk}.


By equation \eqref{EQN:MaxPEk} we have $\mathbb{P}-a.s.$, 
\begin{equation}\label{EQN:MaxPEkDiff}
\begin{split}
&\partial_t \left (y_\epsilon - y_\lambda\right ) - e^{-W} \Delta \left ( \gamma (e^W y_\epsilon) - \gamma (e^W y_\lambda)\right ) + \\
&- e^{-W} \left ( G(e^W y_\epsilon)- G(e^W y_\lambda)\right )+f (y_\epsilon - y_\lambda)+\mu (y_\epsilon - y_\lambda) =\\
&= \frac{1}{\alpha} e^{-W}\left ((p_\epsilon -p_\lambda) + (\theta_\epsilon-\theta_\lambda)\right )\,,\\
\end{split}
\end{equation}
hence, multiplying equation \eqref{EQN:MaxPEkDiff} by $(-\Delta)^{-1}(y_\epsilon - y_\lambda)$ and integrating over $\OO$, it holds
\begin{equation}\label{EQN:Est1}
\begin{split}
&\frac{1}{2}|y_\epsilon - y_\lambda|_{-1}^2 + \int_0^t \int_{\OO} \eta (y_\epsilon - y_\lambda)^2 d\xi ds =\\
&= - \int_0^t \int_{\OO} e^{-W} \left (G(e^W y_\epsilon)-G(e^W y_\lambda)\right )(-\Delta)^{-1}(y_\epsilon - y_\lambda)d\xi ds +\\
&- \int_0^t \int_{\OO} e^W \Delta(e^{-W})(y_\epsilon - y_\lambda)\eta (-\Delta)^{-1}(y_\epsilon - y_\lambda) d\xi ds+\\
&-2 \int_0^t \int_{\OO} \nabla(e^{-W}) \cdot \nabla(e^W (y_\epsilon - y_\lambda)\eta)(-\Delta)^{-1}(y_\epsilon - y_\lambda) d\xi ds+\\
&-\int_0^t \int_{\OO} f (y_\epsilon - y_\lambda)(-\Delta)^{-1}(y_\epsilon - y_\lambda) d\xi ds +\\
&-\int_0^t \int_{\OO} \mu  (y_\epsilon - y_\lambda)(-\Delta)^{-1}(y_\epsilon - y_\lambda) d\xi ds +\\
&+\int_0^t \int_{\OO} \frac{e^{-W}}{\alpha} (p_\epsilon -p_\lambda)(-\Delta)^{-1}(y_\epsilon - y_\lambda) d\xi ds +\\
&+\int_0^t \int_{\OO} \frac{e^{-W}}{\alpha} (\theta_\epsilon -\theta_\lambda)(-\Delta)^{-1}(y_\epsilon - y_\lambda) d\xi ds \,,
\end{split}
\end{equation}
where $\eta$ is defined as in \eqref{EQN:StimaF}.

While, the first four integrals in the right hand side of equation \eqref{EQN:Est1} can be bounded similarly as done proving Lemma \ref{LEM:E!RPDE}, see \eqref{EQN:StimaF}, the last two terms can be treated exploiting the Young inequality 
\begin{equation}\label{EQN:EstPT}
\begin{split}
\left |\int_{\OO} \frac{e^{-W}}{\alpha} (p_\epsilon -p_\lambda)(-\Delta)^{-1}(y_\epsilon - y_\lambda) d\xi \right | &\leq C |p_\epsilon -p_\lambda|_2^2 |y_\epsilon - y_\lambda|_{-1} \leq \\
&\leq C_1 |p_\epsilon -p_\lambda|_2^2 + C_2 |y_\epsilon - y_\lambda|_{-1}\,,\\
\left |\int_{\OO} \frac{e^{-W}}{\alpha} (\theta_\epsilon -\theta_\lambda)(-\Delta)^{-1}(y_\epsilon - y_\lambda) d\xi \right | &\leq C |\theta_\epsilon -\theta_\lambda|_2^2 |y_\epsilon - y_\lambda|_{-1} \leq \\
&\leq C |y_\epsilon - y_\lambda|_{-1} + \epsilon + \lambda\,,
\end{split}
\end{equation} 
as to  obtain
\begin{equation}\label{EQN:Est2}
\begin{split}
&\frac{1}{2}\frac{d}{dt} |y_\epsilon(t) - y_\lambda(t)|_{-1}^2 \leq C_1 |y_\epsilon(t) -y_\lambda(t)|_{-1}^2 + C_2 |p_\epsilon(t) -p_\lambda(t)|_{2}^2 + \epsilon + \lambda \,.
\end{split}
\end{equation}

Applying the Gronwall lemma and taking the mean value, we have
\begin{equation}\label{EQN:Est2bis}
\begin{split}
&\mathbb{E} |y_\epsilon(t) - y_\lambda(t)|_{-1}^2 \leq C \mathbb{E} \int_0^t |p_\epsilon(s) -p_\lambda(s)|_{2}^2 ds + \epsilon + \lambda \,,.
\end{split}
\end{equation}
Regarding the second equation in \eqref{EQN:MaxPEk},  we obtain 
\begin{equation}\label{EQN:MaxPEkDiff2}
\begin{split}
&\partial_t (p_\epsilon - p_\lambda) + e^{W} \left (\gamma'(e^W y_\epsilon) \Delta(e^{-W} p_\epsilon) - \gamma'(e^W y_\epsilon) \Delta(e^{-W}p_\lambda)\right )+ \\
&+e^W \left (G'(e^W y_\epsilon)p_\epsilon - DG(e^W y_\lambda) p_\lambda \right )-f (p_\epsilon - p_\lambda) -\mu (p_\epsilon - p_\lambda) = 2(e^W (y_\epsilon -y_\lambda))\,.\\
\end{split}
\end{equation}
Then, multiplying  equation \eqref{EQN:MaxPEkDiff2} by $(p_\epsilon - p_\lambda)$ and integrating over $\OO$, we obtain
\[
\begin{split}
&\frac{1}{2}|p_\epsilon(t) - p_\lambda(t)|_2^2 = |y_\epsilon(T) - y_\lambda(T)|_2^2 +\\
&\int_t^T \int_{\OO} e^{W} \left (\gamma'(e^W y_\epsilon) \Delta(e^{-W}p_\epsilon) - \gamma'(e^W y_\lambda) \Delta(e^{-W}p_\lambda)\right )(p_\epsilon(s) - p_\lambda(s)) d\xi ds + \\
&+\int_t^T\int_{\OO} e^W \left (G'(e^W y_\epsilon)p_\epsilon(s) - G'(e^W y_\lambda)p_\lambda(s)\right )(p_\epsilon(s) - p_\lambda(s)) d\xi ds+\\
 &-\int_t^T \int_{\OO}\mu (p_\epsilon(s) - p_\lambda(s))^2 d\xi ds+\\
 &-\int_t^T \int_{\OO} 2(e^W (y_\epsilon -y_\lambda))(p_\epsilon(s) - p_\lambda(s)) d\xi ds \,.
\end{split}
\]
Rearranging terms above, we further have 
\begin{equation}\label{EQN:Est3}
\begin{split}
&\frac{1}{2}|p_\epsilon(t) - p_\lambda(t)|_2^2 = |y_\epsilon(T) - y_\lambda(T)|_2^2 +\\
&\int_t^T \int_{\OO} e^{W} \gamma'(e^W y_\epsilon) \left (\Delta(e^{-W}p_\epsilon) - \Delta(e^{-W}p_\lambda)\right )(p_\epsilon(s) - p_\lambda(s)) d\xi ds + \\
  &+\int_t^T \int_{\OO} e^{W} \Delta(e^{-W}p_\lambda) \left (\gamma'(e^W y_\epsilon) - \gamma'(e^W y_\lambda))\right )(p_\epsilon(s) - p_\lambda(s)) d\xi ds + \\
&+\int_t^T\int_{\OO} e^W G'(e^W y_\epsilon) \left (p_\epsilon(s) - p_\lambda(s)\right )^2 d\xi ds+\\
&+\int_t^T\int_{\OO} e^W p_\lambda(s) \left (G'(e^W y_\epsilon) - G'(e^W y_\lambda)\right )(p_\epsilon(s) - p_\lambda(s)) d\xi ds+\\
 &-\int_t^T \int_{\OO}\mu (p_\epsilon(s) - p_\lambda(s))^2 d\xi ds+\\
 &-\int_t^T \int_{\OO} 2(e^W (y_\epsilon -y_\lambda))(p_\epsilon(s) - p_\lambda(s)) d\xi ds \,. \\ 
\end{split}
\end{equation}
Therefore, by using the Young inequality, we have
\begin{equation}\label{EQN:Est3bis}
\begin{split}
&|p_\epsilon(t) - p_\lambda(t)|_2^2 \leq |y_\epsilon(T) - y_\lambda(T)|_2^2 +\\
&C \int_t^T |p_\epsilon(s) - p_\lambda(s)|_2^2 ds + C \int_t^T |p_\lambda (s)|_2 |p_\epsilon(s) - p_\lambda(s)|_2 ds +\\
&+C \int_t^T |y_\epsilon(s) - y_\lambda(s)|_2 |p_\lambda (s)|_2 |p_\epsilon(s) - p_\lambda(s)|_2 ds \leq \\
&\leq C\left ( \int_t^T |p_\epsilon(s) - p_\lambda(s)|_2^2 ds + \int_t^T |y_\epsilon(s) - y_\lambda(s)|_2^2 ds\right )\,,
\end{split}
\end{equation}
where we have used $C$ to denote possibly different constants as to simplify notation.

Taking the expectation in equation \eqref{EQN:Est3bis} and combining it with equation \eqref{EQN:Est2bis}, we thus have
\begin{equation}\label{EQN:EstComb}
\begin{split}
&\mathbb{E} |y_\epsilon(t) - y_\lambda(t)|_{-1}^2 + |p_\epsilon(t) - p_\lambda(t)|_2^2 \leq \\
&\leq C \left ( \mathbb{E} \int_0^t |p_\epsilon(s) -p_\lambda(s)|_{2}^2 ds + \epsilon + \lambda \right ) +\\
&+C\left ( \int_t^T |p_\epsilon(s) - p_\lambda(s)|_2^2 ds + \int_t^T |y_\epsilon(s) - y_\lambda(s)|_2^2 ds\right )\,,
\end{split}
\end{equation}
so that, if $T$ is small enough, we can infer that
\begin{equation}\label{EQN:EstFinal}
\begin{split}
&\mathbb{E} |y_\epsilon(t) - y_\lambda(t)|_{-1}^2 + |p_\epsilon(t) - p_\lambda(t)|_2^2 \leq C \left (\epsilon + \lambda \right ) \,,
\end{split}
\end{equation}
implying that  $(y_\epsilon,p_\epsilon)$ is a Cauchy sequence, therefore
,along a subsequence still denoted by $\{\epsilon\} \to 0$ for the sake of clarity, we have
\begin{equation}\label{EQN:ConSubs2}
\begin{split}
y_\epsilon \to y & \quad\mbox{weakly}\, \mbox{in} \, L^2((0,T);H^1_0)\,,\\
p_\epsilon \to p & \quad\mbox{weakly}\, \mbox{in} \, L^2((0,T)\times \OO)\,,\\
u_\epsilon := \frac{1}{\alpha} e^{-W}(p_\epsilon + \theta_\epsilon) \to u^* & \quad\mbox{weakly}\, \mbox{in} \, L^2((0,T)\times \OO \times \Omega)\,.
\end{split}
\end{equation} 

Letting then $\epsilon \to 0$ in the first equation in \eqref{EQN:MaxPEk}, we have
\[
\partial_t y^* - e^{-W} \Delta \gamma (e^W y^*) - e^{-W} G(e^W y^*)+\mu y^* = e^{-W} u^*\,,\\
\]
hence, since $\Psi$ is lower--semicontinuous, previous computations give us:
\[
\Psi(u^*) = \inf_{u \in \mathcal{U}} \Psi(u)\,,
\]
and the claimed existence result follows.

\end{proof}

\begin{Theorem}[Necessary condition of optimality]\label{THM:NC}
Let be $(v^*,u^*)$ an optimal pair for problem \eqref{EQN:P}, then if $\alpha > 0$ it holds
\[
u^*(t,\xi) = \frac{1}{\alpha} P_U(-e^{-W}p(t,\xi))\,,\quad \mbox{a.e. on} \quad (0,T) \times \OO \times \Omega\,,
\]
where $p$ is the solution to the dual backward equation \eqref{EQN:DualBack} and
\begin{equation}\label{EQN:Proj}
P_U(v) = 
\begin{cases}
M & v \geq M\,,\\
v & |v| \leq M\,\\
-M & v \leq M\,.\\
\end{cases}
\end{equation}
\end{Theorem}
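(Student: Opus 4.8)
The plan is to read the claimed formula as the first-order variational characterization of the minimizer of the cost over the closed convex admissible set $\mathcal{U}=\{|u|\le M\}$, and to obtain it by passing to the limit in the Ekeland scheme already assembled in the proof of Theorem \ref{THM:E!4}. Working in the rescaled variables (so that $v^*=e^W y^*$), since $u^*$ minimizes $\Psi$ over the convex set $\mathcal{U}$, the natural object is the variational inequality
\[
\langle \Psi'(u^*),\,u-u^*\rangle \ge 0\,,\quad \forall\, u\in\mathcal{U}\,,
\]
together with an explicit expression for $\Psi'(u^*)$ delivered by the adjoint state $p$ solving \eqref{EQN:DualBack}.

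First I would compute the \Gateaux derivative of the cost \eqref{EQN:P2}. Fixing an admissible direction $h$ and letting $z$ be the sensitivity of the rescaled state, i.e.\ the solution of the linearization of \eqref{EQN:CFHNPDE} with right-hand side $e^{-W}h$ (which carries the factors $\gamma'(e^W y^*)$ and $G'(e^W y^*)$), differentiation of \eqref{EQN:P2} gives
\[
\langle \Psi'(u^*),h\rangle=\mathbb{E}\int_0^T\!\!\int_{\OO}\!\left(2e^W(e^W y^*-v_1)z+\alpha u^* h\right)d\xi\,dt+\mathbb{E}\int_{\OO} 2e^W(e^W y^*(T)-v_2)z(T)\,d\xi\,.
\]
The purpose of the dual backward equation \eqref{EQN:DualBack} — whose approximating form is the second line of \eqref{EQN:MaxPEk} with terminal datum $p(T)=2(e^W y^*(T)-v_2)$ — is precisely to absorb the $z$-terms: pairing $p$ against the linearized equation and integrating by parts in $t$ and $\xi$ removes $z$ and leaves
\[
\langle \Psi'(u^*),h\rangle=\mathbb{E}\int_0^T\!\!\int_{\OO}\left(\alpha u^*+e^{-W}p\right)h\,d\xi\,dt\,.
\]

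Next I would convert the variational inequality into the pointwise projection. Testing with controls $u=u^*+\mathbbm{1}_{A}(w-u^*)$ for measurable $A$ and constants $w\in[-M,M]$ and invoking a Lebesgue-point/localization argument yields, for a.e.\ $(t,\xi,\omega)$,
\[
\left(\alpha u^*(t,\xi)+e^{-W}p(t,\xi)\right)(w-u^*(t,\xi))\ge 0\,,\quad \forall\, w\in[-M,M]\,,
\]
which is exactly the Euler condition that $u^*$ minimizes $w\mapsto \tfrac{\alpha}{2}w^2+e^{-W}p\,w$ over the constraint. Hence $u^*=\tfrac{1}{\alpha}P_U(-e^{-W}p)$ with $P_U$ the clamping in \eqref{EQN:Proj}; undoing the rescaling $v^*=e^W y^*$ returns the statement for \eqref{EQN:P}.

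Finally, since the optimality system is directly available only in the perturbed form \eqref{EQN:MaxPEk} with the Ekeland term $\theta_\epsilon$ satisfying $|\theta_\epsilon|_{L^2}\le\sqrt{\epsilon}$, I would pass to the limit $\epsilon\to0$ using the Cauchy estimate \eqref{EQN:EstFinal} and the convergences \eqref{EQN:ConSubs2}, so that $\theta_\epsilon\to0$, $p_\epsilon\to p$, $u_\epsilon\to u^*$, and $p$ solves \eqref{EQN:DualBack}. The main obstacle I anticipate is this limit passage in the nonlinear adjoint terms $e^W\gamma'(e^W y_\epsilon)\Delta(e^{-W}p_\epsilon)$ and $e^W G'(e^W y_\epsilon)p_\epsilon$: one must upgrade the weak convergence of $(y_\epsilon,p_\epsilon)$ to enough compactness — exploiting the $L^\infty$ bounds of Lemma \ref{LEM:2} and the Lipschitz/monotone structure of $\gamma'$ and $G'$ — to identify the limits and to guarantee that \eqref{EQN:DualBack} is well posed, so that the projection characterization holds for the limit $p$ and not merely along the subsequence.
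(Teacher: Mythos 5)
Your proposal follows essentially the same route as the paper's proof: rescale via $y=e^{-W}v$, differentiate $\Psi$ in the \Gateaux sense along $u^\lambda=u^*+\lambda(u-u^*)$ to obtain the variational inequality \eqref{EQN:VarCont} with the linearized state $z$ of \eqref{EQN:SysVar}, introduce the backward dual system \eqref{EQN:DualBack} and pair it against $z$ to eliminate the sensitivity and arrive at \eqref{EQN:FeedB}, then read off the pointwise projection \eqref{EQN:Proj}. Your final paragraph on obtaining $p$ as the $\epsilon\to 0$ limit of the Ekeland system \eqref{EQN:MaxPEk}, and your explicit localization argument turning the integral inequality into the a.e.\ Euler condition, are extra diligence the paper omits (it introduces \eqref{EQN:DualBack} directly, without discussing its well-posedness, and passes from \eqref{EQN:FeedB} to the projection formula without comment), so they only strengthen the same argument.
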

\begin{Remark}
 We would like to underline that in literature about stochastic control problem, the first order conditions of optimality (the {\it Pontryagin maximum principle})
are expressed in terms of dual stochastic backward equation, see, e.g, \cite{BCDP,Bre}. Here, instead, optimality conditions are given in terms of a random backward dual equation which allows to simplify the setting also giving more insights on the derived optimal controller.
\end{Remark}
\begin{proof}
We provide the result exploiting the rescaling transformation $y := e^{-W}X$, hence proving necessary condition for the problem \eqref{EQN:P2}.

Let $(y^*,u^*)$ be an optimal pair for problem \eqref{EQN:P2}, therefore we have that for any $u \in \mathcal{U}$, defining $u^\lambda := u^* + \lambda \bar{u} = u^* + \lambda (u - u^*)$, $\lambda \geq 0$, by the optimality of $u^*$ it must hold,
\[
\frac{1}{\lambda} \left (\Psi(u^\lambda) - \Psi(u^*)\right ) \geq 0\,.
\]

By the G\^{a}teaux differentiability of $\Psi$ it follows, taking the limit as $\lambda \to 0$,
\begin{equation}\label{EQN:VarCont}
\begin{split}
&\mathbb{E}\left [ \int_0^T \int_{\OO} \left(e^W y^*(t,\xi) -v_1(\xi)\right) z(t) + \alpha u^* \bar{u} d\xi dt \right ]+\\
&+ \mathbb{E}\left [\int_{\OO} \left (e^W y^*(T,\xi) -v_2(\xi)\right) z(T) d\xi \right ] \geq 0\,,
\end{split}
\end{equation}
being $z$ the solution to the system in variation defined as
\begin{equation}\label{EQN:SysVar}
\begin{cases}
&\dot{z}(t) - \Delta \left (\gamma'(e^W y^{*}) z(t)\right ) + G'(e^W y^{*})z(t) + \mu z(t) = e^{-W} u^*  \,,\\
&\gamma'(e^W y^{*}) z(t) \in H^1_0(\OO)\,,\, t \in (0,T)\\
& z(0)=0\,.
\end{cases}
\end{equation}
Therefore, introducing the backward dual system
\begin{equation}\label{EQN:DualBack}
\begin{split}
\dot{p}(t) &= - \Delta \gamma' (e^W y^{*})p - G'(e^W y^{*})p_\lambda (t) + \mu p(t) + 2(e^W y^{*} - v_1)\,,\\
p(T) &= 2(y^*(T)-v_2)\,,
\end{split}
\end{equation}
and exploiting equations \eqref{EQN:SysVar}--\eqref{EQN:DualBack} together with equation \eqref{EQN:VarCont},
we have 
\begin{equation}\label{EQN:FeedB}
\begin{split}
\mathbb{E}\left [ \int_0^T \int_{\OO} \left \langle e^{-W}p(t) + \alpha u^*, \bar{u} \right \rangle dt \right ]\geq 0\,,
\end{split}
\end{equation}
which gives
\[
u^*(t,\xi) = \frac{1}{\alpha} P_U(-e^{-W}p(t,\xi))\,,\quad \mbox{a.e. on} \quad (0,T) \times \OO \times \Omega\,,
\]
where $P_U$ 
is the projection operator 
defined  in \eqref{EQN:Proj}.
\end{proof}

\begin{Theorem}[The bang--bang principle]
Let be $(v^*,u^*)$ an optimal pair for problem \eqref{EQN:P} and let $\alpha = 0$, then it holds
\begin{equation}\label{EQN:BangBang}
u^*= 
\begin{cases}
-M & \text{ if } p > 0\\
\in [-m,M] & \text{ if } p = 0\\
M & \text{ if } p <0 \,.
\end{cases}
\end{equation}
where $p$ is the solution to the dual backward equation \eqref{EQN:DualBack}.
\end{Theorem}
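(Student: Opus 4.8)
The plan is to re-run the variational argument of Theorem \ref{THM:NC} and specialize it to $\alpha = 0$. Working again with the rescaled problem \eqref{EQN:P2} via $y := e^{-W}X$, the chain of steps leading to the first-order condition \eqref{EQN:FeedB} --- G\^{a}teaux differentiation of $\Psi$, the variational inequality \eqref{EQN:VarCont}, the system in variation \eqref{EQN:SysVar}, and the backward dual equation \eqref{EQN:DualBack} --- uses $\alpha$ only through the linear term $\alpha u^* \bar u$ in \eqref{EQN:VarCont}. Nothing in this derivation requires $\alpha>0$, and the dual equation \eqref{EQN:DualBack} itself is independent of $\alpha$, so $p$ is well defined in the present case. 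Setting $\alpha=0$ in \eqref{EQN:FeedB} thus yields the purely linear condition
\[
\mathbb{E}\left[\int_0^T\int_{\OO}\langle e^{-W}p(t,\xi),\,u(t,\xi)-u^*(t,\xi)\rangle\,d\xi\,dt\right]\geq 0\,,\qquad\forall\,u\in\mathcal{U}\,,
\]
which expresses that $u^*$ minimizes the linear functional $u\mapsto\mathbb{E}\int_0^T\int_{\OO}\langle e^{-W}p,u\rangle\,d\xi\,dt$ over the admissible set $\mathcal{U}$.

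Next I would pass from this integral inequality to a pointwise one. Since $\mathcal{U}$ is defined by the pointwise box constraint $|u|\leq M$ a.e.\ in $(0,T)\times\OO\times\Omega$, one may insert variations supported on arbitrarily small measurable sets; a Lebesgue-point argument then gives that, for a.e.\ $(t,\xi,\omega)$, the value $u^*(t,\xi)$ minimizes the scalar map $u\mapsto\langle e^{-W}p(t,\xi),u\rangle$ over $\{|u|\leq M\}$. As $e^{-W}>0$ a.e., the sign of $e^{-W}p$ agrees with that of $p$, and minimizing a linear form over the interval $[-M,M]$ forces $u^*=-M$ on $\{p>0\}$ and $u^*=M$ on $\{p<0\}$, while on the singular set $\{p=0\}$ the functional is degenerate and every admissible value is optimal. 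This is precisely \eqref{EQN:BangBang}.

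The main obstacle is the rigorous localization on the singular set $\{p=0\}$. Away from it the bang-bang law is immediate, but where $p=0$ the linear form carries no information and $u^*$ is left undetermined, which is exactly why the statement only asserts $u^*\in[-M,M]$ there; one must verify, via a measurable-selection argument, that the set of $(t,\xi,\omega)$ escaping the dichotomy has measure zero. A secondary point worth checking is that $\Psi$ remains G\^{a}teaux differentiable once the quadratic penalty $\tfrac{\alpha}{2}|u|^2$ is removed --- this holds because the control enters the cost only through the state $y$, so the derivative computation of Theorem \ref{THM:NC} carries over unchanged with the term $\alpha u^*\bar u$ simply absent.
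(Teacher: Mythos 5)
Your proposal is correct and follows essentially the same route as the paper: rerun the variational argument of Theorem \ref{THM:NC} with $\alpha=0$ to obtain the linear condition \eqref{EQN:FeedBBB}, then read off the bang--bang law pointwise from minimization of $u\mapsto\langle e^{-W}p,u\rangle$ over the box $|u|\leq M$. The paper compresses this into one line (``which yields equation \eqref{EQN:BangBang}''), so your explicit localization via variations on small measurable sets, and your check that G\^{a}teaux differentiability survives the removal of the quadratic penalty, merely fill in details the paper leaves implicit (note also that the $[-m,M]$ in the statement is a typo for $[-M,M]$, as your argument correctly gives).
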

\begin{proof}
Proceeding as in Theorem \ref{THM:NC} with obtain the equivalent of equation \eqref{EQN:FeedB}to be
\begin{equation}\label{EQN:FeedBBB}
\begin{split}
\mathbb{E}\left [ \int_0^T \int_{\OO} \left \langle e^{-W}p (t) , \bar{u} \right \rangle dt\right ] \geq 0\,,
\end{split}
\end{equation}
which yields equation \eqref{EQN:BangBang}, and the claim follows.
\end{proof}
\begin{Remark}
 By \eqref{EQN:BangBang} it follows that if $\mid v^* -v_1 \mid >0 $ a.e. on $(0,T) \times \OO \times \Omega$ then the optimal controller $u^*$ is a {\it bang-bang} controller, namely $\mid u^* \mid =M$ a.e. on $(0,T) \times \OO \times \Omega$. 
\end{Remark}

\newpage

\end{document}